\tikzset{font=\small}
\newtheorem{theorem}{Theorem} [section]
\newtheorem{lemma}[theorem]{Lemma}
\newtheorem{corollary}[theorem]{Corollary}
\newtheorem{proposition}[theorem]{Proposition}
\newtheorem{example}[theorem]{Example}
\newtheorem{remark}[theorem]{Remark}
\theoremstyle{definition}
\newtheorem{definition}[theorem]{Definition}
\newcommand{\dom}{\mathrm{dom}}
\newcommand{\FACT}{\mathsf{FGA}}
\newcommand{\FSPACT}{\mathsf{FSPA}}
\subjclass[2010]{20M30, 20M10}
\keywords{Partial action, partial semigroup action, partial monoid action, globalization, enveloping action}
\numberwithin{equation}{section}
\title{Globalization of partial actions of semigroups}
\author{Ganna Kudryavtseva}
\address{G. Kudryavtseva: University of Ljubljana,
Faculty of Mathematics and Physics, Jadranska ulica 19, SI-1000 Ljubljana, Slovenia / Institute of Mathematics, Physics and Mechanics, Jadranska ulica 19, SI-1000 Ljubljana, Slovenia}
\email{ganna.kudryavtseva\symbol{64}fmf.uni-lj.si}
\author{Valdis Laan}
\address{V. Laan: Institute of Mathematics and Statistics, University of Tartu,
51009 Tartu, Estonia}
\email{valdis.laan\symbol{64}ut.ee}
\thanks{This work was supported by the Slovenian Research Agency grant BI-EE/20-22-010. Research of the first named author was supported by  the Slovenian Research Agency grant P1-0288. Research of the second named author was supported by the Estonian Research Council grant PRG1204.}
\begin{document}

\begin{abstract}  
We propose two universal constructions of globalization of a partial action of a semigroup on a set, satisfying certain conditions which arise in Morita theory of semigroups. One of the constructions is based on the tensor product of a partial semigroup act with the semigroup and generalizes the globalization construction of strong partial actions of monoids due to Megrelishvili and Schr\"oder. It produces the initial object in an appropriate category of globalizations of a given partial action. The other construction involves ${\mathrm{Hom}}$-sets and is novel even in the monoid setting. It produces the terminal object in an appropriate category of globalizations. While in the group case the results of the two constructions are isomorphic, they can be far different in the monoid case.
\end{abstract}

\maketitle

\section{Introduction}
The study of partial actions and related concepts is an active research area, see the survey article \cite{D19} and, e.g., \cite{ABV19,BEGRR22,DKhS21,FMS21,HV20,R22,SV22}  for more recent developments.

Strong partial monoid actions were introduced by Megrelishvili and Schr\"oder \cite[Definition 2.3]{MS04}\footnote{In \cite[Definition 2.3]{MS04} strong partial monoid actions were called  partial monoid actions.}, who have shown that they generalize partial group actions of Exel \cite{Exel98} (see also Abadie \cite{A03} and Kellendonk and Lawson \cite{KL04}) and that every strong partial monoid action can be globalized (or has an enveloping action, in the terminology of \cite{A03}), that is, it arises as a restriction of a global action. A more general notion of a partial monoid action by Hollings \cite{H07}  has found recent applications in the theory of two-sided restriction semigroups \cite{CG12,K15,K19}. Partial actions of inverse semigroups have been studied in \cite{GH09,Kh19}, of left restriction semigroups in \cite{GH09} and of two-sided restriction semigroups in~\cite{DKhK21}.

If a monoid $S$ acts partially from the right on a set $A$, we say that $A$ is a right partial $S$-act. The globalization of a strong right partial $S$-act $A$ from \cite{H07, MS04}, denote it by $A\otimes S$, is universal in the sense that for any other globalization $B$ of $A$ there is a unique morphism from $A\otimes S$ to $B$, which means that $A\otimes S$ is an initial object in the category of all globalizations of $A$. In the case where $S$ is a group, it follows from \cite[Theorem 5.4]{KL04} that $A\otimes S$ is, up to isomorphism, the only $A$-generated globalization of $A$. This raises the question if the same holds also in the monoid case, and if the answer is negative, whether  the category of all $A$-generated globalizations of $A$ possesses a terminal object, which could be regarded as the 'smallest' $A$-generated globalization of $A$, while $A\otimes S$ being the freest such globalization. 

In this note, we propose a new construction of a globalization of $A$ which involves ${\mathrm{Hom}}$-sets, denoted $A^S$, and prove that it produces the terminal object in the category of all $A$-generated globalizations of $A$. While in the group case $A\otimes S$ and $A^S$ are isomorphic, they can be far different with infinitely many intermediate non-isomorphic objects between them in the monoid case (see Example \ref{ex:example}). We hope that the universal globalization $A^S$ will find applications in the study of various globalization problems for partial actions of semigroups, monoids and beyond.

We work at the level of generality of semigroups and
generalize to partial acts two classes  of global semigroup acts, called  firm and nonsingular, which arise in the Morita theory of semigroups  (see, e.g., \cite{CS,LMR18,Lawson2011}). 
We construct globalizations of strong partial acts in these classes using the tensor product and the ${\mathrm{Hom}}$-set constructions, prove the universal properties for the constructed globalizations and derive respective corollaries for the special case of strong partial monoid actions.

The structure of the paper is as follows. In Section \ref{s:actions} we collect necessary definitions and basic facts about partial semigroup actions and their globalizations. In Section \ref{s:tensor} for a semigroup $S$ we construct the tensor product globalization $A\otimes S$ of a strong and firm partial $S$-act $A$ and prove its universal property (see Theorem \ref{th:tensor}). We then prove that the category of firm global $S$-acts is a reflective subcategory of the category of firm and strong partial $S$-acts with the tensoring globalization functor being the reflector (see Theorem~\ref{th:reflective}). In Section \ref{s:hom} we construct the ${\mathrm{Hom}}$-set globalization $A^S$ of a unitary, strong and nonsingular partial $S$-act $A$ and prove its universal property (see~Theorem \ref{th:hom}). Finally, in Section \ref{s:monoids} we apply our results to strong partial monoid actions, see Theorem~\ref{th:monoid}, Corollary \ref{cor:groups} and Example \ref{ex:example}. 

Throughout the text, the notation $S$ is used for a semigroup and $A,B,C$ for sets or for (global or partial) right $S$-acts.

\section{Actions and partial actions of semigroups}\label{s:actions}

\subsection{Partial actions of semigroups}
We first recall the definition of an action of a semigroup on a set. We will refer to  actions  as  
global actions, to emphasise their difference from partial actions, which we consider along with global actions throughout the paper.
\begin{definition} (Global action)
 A map $*: A\times S \to A, \;\; (a,s)\mapsto a* s$,
is called a {\em right global action} of $S$ on $A$ if for all $a\in A$ and $s,t\in S$ we have $(a* s)* t = a* st$. 
\end{definition}

If $X,Y$ are sets then by a {\em partial map} $f\colon X\to Y$ we understand a map $Z\to Y$ where $Z\subseteq X$. We write $Z={\mathrm{dom}}(f)$. If $z\in {\mathrm{dom}}(f)$ we say that $f(z)$ is {\em defined}.

\begin{definition}(Partial action) \label{def:partial} A partial map $\cdot: A\times S \to A, \; (a,s)\mapsto a\cdot s$, is called a {\em right partial action} of $S$ on $A$ if the following condition holds:
\begin{enumerate}
    \item[(PA)] If $a\cdot s$ and $(a\cdot s)\cdot t$ are defined then $a\cdot st$ is defined and $(a\cdot s)\cdot t = a\cdot st$.
\end{enumerate}
\end{definition}

Left global actions and left partial actions can be defined similarly. In this note, unless explicitly stated otherwise, we deal with right global and partial actions. Thus by a partial action (or by a global action) we mean  a right partial action (or a right global action).

The notion of a partial action generalizes that of a global action. If $\cdot$ is a partial action of $S$ on $A$, we say that $(A,\cdot)$ is a {\em partial} $S$-{\em act}, and when $\cdot$ is clear from the context, we  sometimes write $A$ for $(A, \cdot)$. In the case where $\cdot$ is a global action on a set $B$ we say that $(B,\cdot)$ is a {\em global} $S$-{\em act}. If $(B,\cdot)$ is a global $S$-act and $C\subseteq B$ is such that $c\cdot s\in C$ for all $c\in C$ and $s\in S$ we say that $C$ is a {\em subact of} $B$.

 \begin{definition}\label{def:su} (Unitary and strong partial actions) A right partial 
 action $\cdot$ of $S$ on $A$ is called {\em strong} if for all $a\in A$ and $s,t\in S$ the following condition holds:
\begin{enumerate}
    \item[(S)] If $a\cdot s$ and $a\cdot st$ are defined then $(a\cdot s)\cdot t$ is defined and $(a\cdot s)\cdot t = a\cdot st$. 
\end{enumerate}
It is called {\em unitary} if the following condition holds:
\begin{enumerate}
    \item[(U)] For each $a\in A$ there are $b\in A$ and $s\in S$ such that $b\cdot s$ is defined and $a=b\cdot s$.
\end{enumerate}
\end{definition} 

In view of (PA), condition (S) is equivalent to the condition that under the assumption that $a\cdot s$ is defined, we have that $a\cdot st$ is defined if and only if $(a\cdot s)\cdot t$ is defined in which case $(a\cdot s)\cdot t = a\cdot st$.

Note that in the definition of a partial action of a monoid due to Hollings \cite[Definition~2.2]{H07} it is required that $\cdot$ satisfies (PA) and the following condition:
\begin{enumerate}
    \item[(Um)] For all $a\in A$: $a\cdot 1$ is defined and $a\cdot 1=a$.
\end{enumerate}

The following lemma shows that our notion of a strong and unitary partial action of a semigroup generalizes the notion of a strong partial action of a monoid, see Hollings~\cite{H07}.

\begin{lemma}\label{lem:monoid}
If $S$ is a monoid, then $\cdot$ is a strong and unitary partial action of $S$ on a set $A$ (that is, conditions (PA),(U) and (S) hold) if and only if $\cdot$ is a strong partial action of $S$ on $A$ in the sense of \cite{H07} (that is, conditions (PA), (Um) and (S) hold).
\end{lemma}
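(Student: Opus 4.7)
The plan is to prove the equivalence by showing that (Um) and (U) are interchangeable in the presence of (PA) and (S). One direction is essentially trivial, and the real content is in deriving (Um) from (U) and (S).

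For the easy direction, I will assume (PA), (Um), (S) and derive (U). Given any $a \in A$, condition (Um) says that $a \cdot 1$ is defined with $a \cdot 1 = a$. Taking $b = a$ and $s = 1$ in the statement of (U) immediately gives the required witness.

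For the substantive direction, I will assume (PA), (U), (S) and derive (Um). Fix $a \in A$. By (U), there exist $b \in A$ and $s \in S$ with $b \cdot s$ defined and $a = b \cdot s$. The aim is to show that $a \cdot 1$ is defined and equals $a$, i.e.\ that $(b \cdot s) \cdot 1 = b \cdot s$. The idea is to apply (S) to the triple $(b, s, 1)$: the hypothesis of (S) needs $b \cdot s$ defined (which holds) and $b \cdot s{\cdot}1$ defined, but since $S$ is a monoid we have $s{\cdot}1 = s$, so $b \cdot s1 = b \cdot s$ is defined. Condition (S) then yields that $(b \cdot s) \cdot 1$ is defined and $(b \cdot s) \cdot 1 = b \cdot s1 = b \cdot s = a$. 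Thus $a \cdot 1 = a$, proving (Um).

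The main (and essentially only) obstacle is noticing that one has to bootstrap through the unitary witness: starting from an arbitrary $a$ there is no direct way to assert that $a \cdot 1$ is defined, and one must use the representation $a = b \cdot s$ provided by (U) together with (S) and the monoid identity $s1 = s$ in order to force the definedness of $a \cdot 1$. Once this observation is made, the calculation is short; no appeal to (PA) beyond its implicit role in Definition \ref{def:partial} is needed.
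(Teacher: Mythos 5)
Your proof is correct and follows essentially the same route as the paper: the reverse direction is immediate by taking $b=a$, $s=1$, and the forward direction writes $a=b\cdot s$ via (U) and applies (S) to $b\cdot s$ and $b\cdot(s1)=b\cdot s$ to force $(b\cdot s)\cdot 1$ to be defined and equal to $a$. No gaps.
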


\begin{proof} Suppose that $\cdot$ satisfies conditions (PA), (U) and (S). We show that (Um) is satisfied. Let $a\in A$ and write it as $a=b\cdot s$, where $b\in A$, $s\in S$. Since $b\cdot s$ and $b\cdot (s1)$ are defined, condition (S) implies that $(b\cdot s)\cdot 1$ is defined and $(b\cdot s)\cdot 1 = b\cdot (s1)$. Then $a\cdot 1$ is defined and $a\cdot 1=a$, as required. The reverse implication is clear.
\end{proof}

\begin{definition} (Restriction of an action)
Let $*$ be a right global action of $S$ on a set $B$ and $A\subseteq B$ a subset. The {\em restriction} of $*$ to $A$ is the partial map $\cdot \colon A\times S\to A$ where for $a\in A$ and $s\in S$  the element $a\cdot s$ is defined if and only if $a*s \in A$ in which case $a\cdot s= a*s$.  
\end{definition}

\begin{lemma}\label{lem:strong} A restriction of a global action of $S$ is a strong partial action of $S$. 
\end{lemma}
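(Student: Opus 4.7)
The plan is to unwind the definition of the restriction and verify conditions (PA) and (S) directly, using only associativity of the global action $*$. Let $*$ be the given global action of $S$ on $B$, let $A \subseteq B$, and let $\cdot$ denote the restriction. Recall that for $a \in A$ and $s \in S$, $a \cdot s$ is defined precisely when $a * s \in A$, in which case the two values agree.

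For (PA), I would assume that $a \cdot s$ and $(a \cdot s) \cdot t$ are both defined. The first assumption gives $a * s \in A$, and the second then gives $(a * s) * t \in A$. Applying associativity of $*$, we have $(a * s) * t = a * (st) \in A$, so by definition of the restriction $a \cdot (st)$ is defined and equals $a * (st) = (a * s) * t = (a \cdot s) \cdot t$.

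For (S), I would assume that $a \cdot s$ and $a \cdot (st)$ are both defined, so that $a * s \in A$ and $a * (st) \in A$. Again associativity of $*$ yields $(a * s) * t = a * (st) \in A$. Since $a * s \in A$, this says exactly that $(a \cdot s) \cdot t$ is defined, and moreover $(a \cdot s) \cdot t = (a * s) * t = a * (st) = a \cdot (st)$.

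There is no real obstacle here: both conditions reduce to a single application of associativity of the global action together with the observation that an element lies in $A$ iff the corresponding restricted operation is defined. The only thing to be mindful of is the order of the two membership checks in (PA) — one must first note $a * s \in A$ before asserting that $(a * s) * t$ even makes sense as a restricted operation — but this is automatic from the hypotheses.
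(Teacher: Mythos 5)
Your proof is correct and follows essentially the same route as the paper's: unwind the definition of the restriction and apply associativity of the global action. The only difference is that you also verify (PA) explicitly, which the paper leaves implicit, checking only condition (S).
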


\begin{proof}
Let $*$ be a global action of $S$ on a set $B$ and $A\subseteq B$. Let $\cdot$ be the restriction of $*$ to $A$. Suppose that $a\cdot s$ and $a\cdot st$ are defined. Then $a\cdot s = a*s\in A$ and $a\cdot st = a*st \in A$. It follows that $(a\cdot s)\cdot t$ is defined and equals $(a*s)*t=a*st=a\cdot st$. 
\end{proof}

\begin{definition}\label{def:morphism} (Morphisms of partial acts) Let $\cdot$ and $\circ$ be partial actions of $S$ on the sets $A$ and $B$, respectively, and let $\varphi\colon A\to B$ be a map. We say that $\varphi$ is a {\em morphism} (or a {\em homomorphism}) of partial $S$-{\em acts from} $A$ {\em to} $B$ provided that for all $a\in A$ and $s\in S$ if $a\cdot s$ is defined then $\varphi(a)\circ s$ is defined and $\varphi(a\cdot s) = \varphi(a)\circ s$. 
\end{definition}

The set of all morphisms from $(A,\cdot)$ to $(B,\circ)$ will be denoted by ${\mathrm{Hom}}(A,B)$.

A morphism of global acts \cite[Definition I.4.15]{KKM} is a special case of Definition  \ref{def:morphism}.  

\subsection{Globalization of a partial action}

\begin{definition}\label{def:glob} (Globalization)
Let $\cdot$ be a right partial action  of a semigroup $S$ on a set $A$. A {\em globalization} of $\cdot$ is a right action $*$ of $S$ on a set $B$ and an injective map $\iota: A\to B$ such that the following conditions hold:
\begin{enumerate}
\item[(G1)] For all $a\in A$ and $s\in S$: $a\cdot s$ is defined if and only if $\iota(a)* s \in \iota(A)$. 
\item[(G2)] For all $a\in A$ and $s\in S$: if $a\cdot s$ is defined  then $\iota(a\cdot s) = \iota(a)*s$. 
\end{enumerate}
We also say that the right global $S$-act $(B,*)$ is a {\em globalization} of the right partial $S$-act $(A,\cdot)$ via the map $\iota$. 
\end{definition}

A partial action $\cdot$ can be {\em globalized} if it has a globalization. 
Lemma \ref{lem:strong} implies the following.

\begin{proposition}\label{prop:globstrong}
If a partial action of a semigroup can be globalized then it is strong. 
\end{proposition}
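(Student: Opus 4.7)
The plan is to reduce the statement to Lemma \ref{lem:strong}. If $(B,*)$ is a globalization of $(A,\cdot)$ via the injective map $\iota\colon A\to B$, then conditions (G1) and (G2) together say precisely that, when we identify $A$ with its image $\iota(A)\subseteq B$, the partial action $\cdot$ is literally the restriction of the global action $*$ to $\iota(A)$ in the sense of the definition just before Lemma \ref{lem:strong}. Since strongness of a partial action only refers to which elements are defined and what equalities hold, it is transported along any bijection of partial $S$-acts, and in particular along the bijection $\iota\colon A\to \iota(A)$.

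To make this explicit, I would argue directly. Suppose $a\in A$ and $s,t\in S$ are such that $a\cdot s$ and $a\cdot st$ are both defined. By (G2) we have $\iota(a\cdot s)=\iota(a)*s$ and $\iota(a\cdot st)=\iota(a)*st$. Using associativity of the global action $*$,
\[
\iota(a\cdot s)*t \;=\; (\iota(a)*s)*t \;=\; \iota(a)*st \;=\; \iota(a\cdot st) \;\in\; \iota(A).
\]
Now apply (G1) with the element $a\cdot s\in A$ in place of $a$: since $\iota(a\cdot s)*t\in \iota(A)$, we conclude that $(a\cdot s)\cdot t$ is defined. Then (G2) gives $\iota((a\cdot s)\cdot t)=\iota(a\cdot s)*t=\iota(a\cdot st)$, and injectivity of $\iota$ yields $(a\cdot s)\cdot t=a\cdot st$, which is exactly condition (S).

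There is essentially no obstacle: the argument is a three-line manipulation using (G1), (G2), injectivity of $\iota$, and associativity of the global action. The only thing to watch is that (G1) must be applied to the element $a\cdot s$ (not to $a$), which is the reason injectivity and the "if and only if" form of (G1) are both used.
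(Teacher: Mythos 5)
Your proof is correct and matches the paper's approach: the paper simply notes that Proposition \ref{prop:globstrong} follows from Lemma \ref{lem:strong} (a restriction of a global action is strong), and your explicit three-line argument via (G1), (G2), associativity of $*$, and injectivity of $\iota$ is exactly that lemma's proof transported along $\iota$.
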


\begin{definition} ($A$-generated globalization) \label{def:a-gen}
Let the global action $*$ of $S$ on $B$ be a globalization of the partial action $\cdot$ of $S$ on $A$ via the map $\iota\colon A\to B$. We say that $B$ is $A$-{\em generated} if any $b\in B$ can be written as $b=\iota(a)*s$ for some $a\in A$ and $s\in S$.
\end{definition}

Let $(A,\cdot)$ be a partial $S$-act and $(B,*)$ a global $S$-act which is an $A$-generated globalization of $(A,\cdot)$ via the map $\iota\colon A\to B$. Since $\iota(A)\subseteq B$, for every $b\in A$ there are $a\in A$ and $s\in S$ such that  $\iota(b)=\iota(a)*s$, hence we have $\iota(b)=\iota(a\cdot s)$, so that $b=a\cdot s$. It follows that $(A,\cdot)$ is necessarily unitary. So $A$-generated globalizations do not exist for non-unitary $S$-acts. If $(A,\cdot)$ is unitary and $(B,*)$ is its globalization via the map $\iota$, then $\iota(A)$ is a subset of the set $C=\{\iota(a)*s\colon a\in A, s\in S\}$. The elements of $\iota(A)$ are those $\iota(a)*s\in C$, for which $a\cdot s$ is defined. The following statement is now immediate.

\begin{lemma}
Let the global $S$-act $(B,*)$ be a globalization of the unitary partial $S$-act $(A,\cdot)$ via the map $\iota\colon A\to B$. Let $C=\{\iota(a)*s\colon a\in A, s\in S\}$. Then $C$ is a subact of $B$ and is the unique $A$-generated globalization of $A$ which is a subact of $B$.
\end{lemma}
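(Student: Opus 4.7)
The plan is to verify three things in order: that $C$ is a subact of $(B,*)$, that $(C, *|_C)$ is an $A$-generated globalization of $(A,\cdot)$ via $\iota$, and that any subact of $B$ which is an $A$-generated globalization of $A$ via $\iota$ must coincide with $C$. I read the statement as fixing the inclusion $\iota \colon A \to B$ throughout, so the uniqueness is uniqueness among globalizations sharing this $\iota$.

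For the first point I would take an arbitrary element $c = \iota(a)*s \in C$ and $t \in S$, and use associativity of $*$ to obtain $c * t = \iota(a) * (st) \in C$. For the second point, I would begin by showing $\iota(A) \subseteq C$: given $a \in A$, unitarity of $(A,\cdot)$ supplies $b \in A$ and $s \in S$ with $a = b \cdot s$, and then (G2) applied in $B$ gives $\iota(a) = \iota(b) * s \in C$. So $\iota$ corestricts to an injection $A \to C$. Conditions (G1) and (G2) refer only to $\iota(a)$, $\iota(A)$, and products $\iota(a)*s$, which are unchanged when we view everything inside $C$ rather than $B$; they therefore transfer to $(C, *|_C)$ directly from $(B,*)$. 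The generating property holds by the very definition of $C$.

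For uniqueness, suppose $D \subseteq B$ is a subact of $B$ and is itself an $A$-generated globalization of $(A, \cdot)$ via $\iota$. Two containments finish the argument. First, since $\iota(A) \subseteq D$ and $D$ is a subact, $\iota(a) * s \in D$ for every $a \in A$ and $s \in S$, whence $C \subseteq D$. Second, the $A$-generated hypothesis on $D$ says every $d \in D$ has the form $\iota(a) * s$, so $D \subseteq C$. Hence $D = C$.

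I do not expect any genuine obstacle: the content of the lemma is the observation that, among subacts of $B$ through which $\iota$ factors, the property of being $A$-generated pins one down as the closure of $\iota(A)$ under the action. The only small point to watch is that being a subact automatically equips $D$ with the restricted action, so one does not need to check that the $S$-action on $D$ agrees with that on $C$ where they intersect.
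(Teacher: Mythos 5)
Your proof is correct and follows essentially the same route as the paper, which presents the lemma as immediate from the preceding observations that unitarity forces $\iota(A)\subseteq C$ and that the elements of $\iota(A)$ are exactly those $\iota(a)*s\in C$ for which $a\cdot s$ is defined (so (G1) and (G2) pass to the subact unchanged). Your explicit double-containment argument for uniqueness and your remark that the restricted action on a subact is automatic are exactly the details the paper leaves to the reader.
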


It is easy to see that if $(A,\cdot)$ is a unitary global $S$-act then any of its  $A$-generated globalizations is isomorphic to $(A,\cdot)$.

Let $(A,\cdot)$ be a partial $S$-act. We define the category ${\mathcal{G}}(A,S,\cdot)$ of {\em globalizations} of $(A,\cdot)$ as follows. Its objects are triples $(B,\iota, *)$ where  $(B, *)$ is a globalization of $(A,\cdot)$ and $\iota\colon A\to B$ is an injective map satisfying  Definition \ref{def:glob}. A morphism from $(B,\iota, *)$ to $(C,i, \circ)$ is a map $\varphi\colon B\to C$ which is a morphism of right $S$-acts, that is, $\varphi(b*s) = \varphi(b)\circ s$ holds for all $b\in B$ and $s\in S$, and which satisfies $\varphi\iota=i$. In the case where $(A,\cdot)$ is unitary let ${\mathcal{G}}_A(A,S,\cdot)$ be the full subcategory of ${\mathcal{G}}(A,S,\cdot)$ whose objects are $A$-{\em generated  globalizations} of $(A,\cdot)$. When $\iota$ and $*$ are clear from the context, we sometimes abbreviate $(B,\iota,*)$ by $B$.

The proof of the following is straightforward.

\begin{proposition} \label{prop:unique} Let $(A,\cdot)$ be a unitary partial $S$-act and $(B,\iota, *)$ and $(C,i, \circ)$ be objects of the category 
${\mathcal{G}}(A,S,\cdot)$. 
\begin{enumerate} 
    \item\label{i:u1} Suppose that $B$ is $A$-generated and $\varphi: (B,\iota,*) \to (C,i,\circ)$ is a morphism in ${\mathcal{G}}(A,S,\cdot)$.
    Then $\varphi(B)$ is the $A$-generated subact of $C$. 
    \item \label{i:u2}  If $B$ is $A$-generated then there is at most one morphism from $(B,\iota, *)$  to $(C,i, \circ)$.
    \item \label{i:u3} If $B$ and $C$ are $A$-generated then a morphism from $(B,\iota, *)$  to $(C,i, \circ)$ is necessarily surjective. 
\end{enumerate}
\end{proposition}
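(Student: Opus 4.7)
The three parts are closely linked, so my plan is to dispatch (\ref{i:u1}) first by a direct calculation and then derive (\ref{i:u2}) and (\ref{i:u3}) as immediate consequences. Throughout, the key identity I would exploit is that any morphism $\varphi$ in ${\mathcal{G}}(A,S,\cdot)$ satisfies both $\varphi\iota=i$ and $\varphi(b*s)=\varphi(b)\circ s$, so its value on a generic generator $\iota(a)*s$ is forced to be $i(a)\circ s$.

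For part (\ref{i:u1}), I would take an arbitrary $b\in B$ and use that $B$ is $A$-generated to write $b=\iota(a)*s$ for some $a\in A$, $s\in S$. Applying $\varphi$ yields
\[
\varphi(b)=\varphi(\iota(a)*s)=\varphi(\iota(a))\circ s=i(a)\circ s,
\]
which shows $\varphi(B)\subseteq D:=\{i(a)\circ s : a\in A,\ s\in S\}$, the $A$-generated subact of $C$ identified in the lemma just preceding the proposition. Conversely, every $i(a)\circ s\in D$ is visibly the image under $\varphi$ of $\iota(a)*s\in B$, so $D\subseteq \varphi(B)$. This gives $\varphi(B)=D$.

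For part (\ref{i:u2}), suppose $\varphi,\psi\colon(B,\iota,*)\to(C,i,\circ)$ are two morphisms. For any $b=\iota(a)*s\in B$ the calculation above forces both $\varphi(b)=i(a)\circ s$ and $\psi(b)=i(a)\circ s$, so $\varphi=\psi$ on all of $B$ since $B$ is $A$-generated. Part (\ref{i:u3}) is then immediate: by (\ref{i:u1}) we have $\varphi(B)=\{i(a)\circ s : a\in A,\ s\in S\}$, and since $C$ is itself $A$-generated this set is all of $C$.

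There is essentially no obstacle here; the statement is a bookkeeping consequence of the defining condition $\varphi\iota=i$ combined with $S$-equivariance. The only thing to double-check is that $D$ really is a subact (closed under $\circ$), which follows from $(i(a)\circ s)\circ t=i(a)\circ st$, and that it is exactly the $A$-generated globalization sitting inside $C$, which was recorded in the lemma immediately above the proposition.
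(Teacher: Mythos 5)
Your proof is correct and is precisely the routine verification the paper has in mind: the paper omits the argument entirely, stating only that ``the proof of the following is straightforward,'' and the forced identity $\varphi(\iota(a)*s)=i(a)\circ s$ coming from $\varphi\iota=i$ and $S$-equivariance is exactly the intended mechanism for all three parts. Your appeal to the preceding lemma to identify $\{i(a)\circ s : a\in A,\ s\in S\}$ as \emph{the} $A$-generated subact of $C$ is the right way to close part (1), and parts (2) and (3) follow as you say.
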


\section{The tensor product globalization}\label{s:tensor}
\subsection{The construction and the universal property}
Let $\cdot$ be a right partial action of a semigroup $S$ on a set $A$. For $(a,s)$ and $(b,t)$ in $A\times S$ we put $(a,s)\to (b,t)$ if there is $u\in S$ such that $s=ut$ and the element $a\cdot u$ is defined and equals $b$. We have $(a, ut) \to (a\cdot u, t)$. Let $\rho$ be the smallest equivalence relation on  $A\times S$ which contains $\to$. We put $A\otimes S = (A\times S)/\rho$. Elements of $A\otimes S$ will be denoted by $a\otimes s$ where $a\otimes s$ is the $\rho$-class of $(a,s)$. By the definition, we have $a\otimes s = b\otimes t$ if and only if $(a,s) = (b,t)$ or there exist $n\geq 1$ and elements $(a,s)=(a_0,s_0), (a_1,s_1), \dots, (a_n,s_n) = (b,t)$ such that for all $i=0,\dots, n-1$ we have that either $(a_i,s_i)\to (a_{i+1},s_{i+1})$ or $(a_{i+1}, s_{i+1})\to (a_i,s_i)$ in which case we say that $(b,t)$ {\em is obtained from} $(a,s)$ {\em in} $n$ {\em steps}. We call $A\otimes S$ the {\em tensor product} of the right partial $S$-act $A$ and the left global $S$-act $S$ where $S$ acts on itself by left multiplication. 

For $a\otimes s\in A\otimes S$ and $t\in S$ we put 
\begin{equation}\label{eq:act1}
    (a\otimes s)*t = a\otimes st.
\end{equation}

\begin{lemma} The map $*\colon (A\otimes S)\times S\to A\otimes S$ given in \eqref{eq:act1} defines a global action of $S$ on $A\otimes S$.
\end{lemma}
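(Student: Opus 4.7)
The plan is to verify, in order, (i) that the assignment $(a\otimes s, t)\mapsto a\otimes st$ is well defined on $\rho$-classes, and (ii) that it satisfies the associativity axiom of a global right action. The main (and essentially only) obstacle is well-definedness; once that is in hand, associativity is a direct computation from the associativity of the semigroup operation in $S$.

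For well-definedness, I would fix $t\in S$ and show that the map $(a,s)\mapsto (a,st)$ on $A\times S$ descends to $A\otimes S$. Given the description of $\rho$ via chains, it suffices to check that if $(a,s)\to (b,u)$, i.e.\ $s=vu$ with $a\cdot v$ defined and equal to $b$, then already $(a,st)\to (b,ut)$ in $A\times S$: indeed $st=v(ut)$ and $a\cdot v=b$, so the single-step arrow $(a, v(ut))\to (a\cdot v, ut)=(b,ut)$ is present by the definition of $\to$. Hence $a\otimes st=b\otimes ut$. The reverse arrow $(b,u)\to (a,s)$ is handled symmetrically. A straightforward induction on the number $n$ of steps needed to pass from $(a,s)$ to $(b,u)$ then yields $a\otimes st=b\otimes ut$ whenever $a\otimes s=b\otimes u$, so $*$ is well defined.

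For associativity, given $a\otimes s\in A\otimes S$ and $t,r\in S$, two applications of \eqref{eq:act1} together with associativity in $S$ give
\begin{equation*}
((a\otimes s)*t)*r = (a\otimes st)*r = a\otimes (st)r = a\otimes s(tr) = (a\otimes s)*(tr),
\end{equation*}
which is exactly the defining condition for a right global action. This finishes the proof. I expect the only place a reader might hesitate is at the single-step well-definedness check, and it is worth recording there explicitly that condition (PA) is \emph{not} needed: the argument uses only that the arrow $\to$ is transported by right multiplication on the second coordinate.
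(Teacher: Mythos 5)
Your proof is correct and follows essentially the same route as the paper's: both reduce well-definedness to the single-step case of the generating relation $\to$ (observing that $s=vu$ forces $st=v(ut)$, so the arrow is transported by right multiplication in the second coordinate) and then conclude by induction and symmetry, with associativity being the same one-line computation. Your explicit remark that condition (PA) is not needed here is accurate and consistent with the paper's argument.
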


\begin{proof} Let us first verify that the map $*$ is well-defined. Suppose that $a\otimes s = a' \otimes s'$. It is enough to assume that $(a',s')$ is obtained from $(a,s)$ in one step and then apply induction. Suppose that $(a,s)\to (a',s')$. That is, there is $u\in S$ such that $s=us'$ and the element $a\cdot u$ is defined and equals $a'$. For $t\in S$ we have
$a\otimes st = a\otimes us't = a\cdot u \otimes s't = a'\otimes s't$. If $(a',s')\to (a,s)$, the equality $a\otimes st = a'\otimes s't$ follows by symmetry. Hence $*$ is well-defined. That $*$ defines an action is clear since $(a\otimes s) * uv = a\otimes suv 
=(a\otimes su)*v = ((a\otimes s)*u)*v$.
\end{proof}

The following definition is motivated by the notion of a firm global action, see Definition~\ref{def:firm_global} and the subsequent discussion.

\begin{definition}\label{def:firm_partial} (Firm partial action)
We say that a partial action $\cdot$ of $S$ on $A$ is {\em firm} provided that it is unitary (that is, it satisfies condition (U)) and also it satisfies the condition:
\begin{enumerate}
\item[(F)] whenever $a\cdot s$ and $b\cdot t$ are defined and $a\cdot s = b\cdot t$, we have $a\otimes s= b\otimes t$ in $A\otimes S$.
\end{enumerate}
\end{definition}

\begin{lemma}\label{lem:firm} If $S$ is a monoid then condition {\em (F)} holds.
\end{lemma}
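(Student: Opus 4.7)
The plan is to use the identity element of the monoid to collapse both pairs $(a,s)$ and $(b,t)$ to the same pair in one step of the $\to$ relation.

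Specifically, suppose $a\cdot s$ and $b\cdot t$ are both defined and $a\cdot s = b\cdot t =: c$. Since $S$ is a monoid with identity $1$, we have $s = s\cdot 1$, and by the very definition of $\to$ (taking $u = s$), the fact that $a\cdot s$ is defined and equals $c$ gives $(a,s)\to (c,1)$. Symmetrically, $(b,t)\to (c,1)$. Therefore $a\otimes s = c\otimes 1 = b\otimes t$ in $A\otimes S$.

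There is no real obstacle here; the proof is essentially a one-liner once one notices that the identity of $S$ lets us use the arrow $(x, xy) \to (x\cdot y, 1)$ to reach a canonical representative. I would write it as a short paragraph, with no induction needed.
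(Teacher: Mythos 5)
Your proof is correct and matches the paper's argument exactly: the paper also writes $a\otimes s = a\otimes s1 = (a\cdot s)\otimes 1 = (b\cdot t)\otimes 1 = b\otimes t$, which is precisely your use of the arrow $(a,s1)\to(a\cdot s,1)$ to reach the common representative $c\otimes 1$. No induction is needed in either version, and your observation that this is a one-step reduction is accurate.
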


\begin{proof} Let $a,b\in A$ and $s,t\in S$ be such that $a\cdot s$, $b\cdot t$ are defined and $a\cdot s=b\cdot t$. Then $a\otimes s= a\otimes s1 = a\cdot s\otimes 1 = b\cdot t\otimes 1 = b\otimes t$.
\end{proof}

Suppose that $\cdot$ is firm and let $a\in A$. Let $b\in A$ and $t\in S$ be such that $a=b\cdot t$. Then there is a well-defined map
\begin{equation}\label{eq:delta}
    \delta\colon A\to A\otimes S, \; a\mapsto b\otimes t. 
\end{equation}

\begin{lemma}\label{lem:inj_delta} 
Suppose that $\cdot$ is firm and strong.
The map $\delta$ given in \eqref{eq:delta} is injective.
\end{lemma}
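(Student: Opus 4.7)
The plan is to unfold the definition of $\rho$ and track the element $c\cdot u$ as one walks along an equivalence chain. Suppose $\delta(a_1)=\delta(a_2)$; write $a_1=b_1\cdot t_1$ and $a_2=b_2\cdot t_2$, so that $b_1\otimes t_1=b_2\otimes t_2$ in $A\otimes S$. By the construction of $\rho$, there is a finite sequence $(b_1,t_1)=(c_0,u_0),(c_1,u_1),\dots,(c_n,u_n)=(b_2,t_2)$ in which each consecutive pair is related by $\to$ in one direction or the other. The key claim, to be proved by induction on $i$, is that for every $i\in\{0,1,\dots,n\}$ the element $c_i\cdot u_i$ is defined and equals $a_1$. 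Since $c_0\cdot u_0=b_1\cdot t_1=a_1$, the base case is immediate, and the desired conclusion $a_1=c_n\cdot u_n=b_2\cdot t_2=a_2$ drops out of the final step.

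For the inductive step I would split cases by the direction of the elementary move. Suppose first that $(c_i,u_i)\to(c_{i+1},u_{i+1})$; then there is $v\in S$ with $u_i=vu_{i+1}$ and $c_i\cdot v=c_{i+1}$. Assuming inductively that $c_i\cdot u_i$ is defined, both $c_i\cdot v$ and $c_i\cdot vu_{i+1}=c_i\cdot u_i$ are defined, so the strongness condition (S) yields that $(c_i\cdot v)\cdot u_{i+1}=c_{i+1}\cdot u_{i+1}$ is defined and equals $c_i\cdot u_i$. If instead $(c_{i+1},u_{i+1})\to(c_i,u_i)$, there is $v\in S$ with $u_{i+1}=vu_i$ and $c_{i+1}\cdot v=c_i$; then $c_{i+1}\cdot v$ and $(c_{i+1}\cdot v)\cdot u_i=c_i\cdot u_i$ are both defined, and condition (PA) gives that $c_{i+1}\cdot vu_i=c_{i+1}\cdot u_{i+1}$ is defined and equals $c_i\cdot u_i$. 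Either way the claim propagates from $i$ to $i+1$.

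Note that (S) handles the forward steps of the chain and (PA) handles the backward steps, so both hypotheses of the lemma get used. Condition (F) plays no role in the injectivity argument; it was needed only to ensure that $\delta$ is well-defined in the first place. I do not anticipate any serious obstacle beyond keeping the two directions of the elementary relation $\to$ straight, and not confusing the left multiplication action of $S$ on itself (implicit in the construction of $\rho$) with the partial action on $A$.
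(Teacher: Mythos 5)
Your proof is correct and follows essentially the same route as the paper's: both walk along the $\rho$-chain, showing inductively that each $c_i\cdot u_i$ is defined and equal to the original element, using (S) for the forward steps and (PA) for the backward steps. Your observations that (F) is needed only for well-definedness of $\delta$ and that unitarity supplies the initial factorizations match the paper's argument exactly.
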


\begin{proof}
Suppose that $a,b\in S$ are such that $\delta(a)=\delta(b)$. Since $\cdot$ is unitary we have $a=a'\cdot s$ and $b=b'\cdot t$ for some $a',b'\in A$ and $s,t\in S$. Then $a'\otimes s = b'\otimes t$ and we need to show that $a'\cdot s = b'\cdot t$.  
If $(a',s) = (b',t)$ then $a'=b'$ and $s=t$, so that $a=a'\cdot s = b'\cdot t =b$. Otherwise, there exists a sequence 
$$
(a',s)=(a_0,s_0), (a_1,s_1), \dots, (a_n,s_n) = (b',t)
$$
such that, for all $i=0,\dots, n-1$, we have that either $(a_i,s_i)\to (a_{i+1},s_{i+1})$ or $(a_{i+1}, s_{i+1})\to (a_i,s_i)$. 
For the first transition, we have two possibilities. 

1) If $(a',s)\to (a_1,s_1)$, there exists $u\in S$ such that $a'\cdot u$ is defined,  $s=us_1$ and $a'\cdot u=a_1$. Since $a'\cdot u, a'\cdot us_1$ are defined, strongness implies that $(a'\cdot u)\cdot s_1$ is defined and 
$a'\cdot s = a'\cdot us_1=(a'\cdot u)\cdot s_1 = a_1\cdot s_1$. 

2) If $(a_1,s_1)\to (a',s)$, there exists $u\in S$ such that $s_1=us$, $a_1\cdot u$ is defined and $a_1\cdot u = a'$. 
Since $a'\cdot s = (a_1\cdot u)\cdot s$ is defined, also $a_1\cdot us = a_1\cdot s_1$ is defined and $a'\cdot s = a_1\cdot s_1$. 

Continuing in this way we see that also $a_2\cdot s_2,\ldots, a_{n-1}\cdot s_{n-1}$ are defined and $a'\cdot s = a_1\cdot s_1 = \cdots =  a_n\cdot s_n = b'\cdot t$.
\end{proof}

We now state the main result in this section.

\begin{theorem} \label{th:tensor} Let $S$ be a semigroup and $(A,\cdot)$ a firm and strong partial $S$-act. Then:
\begin{enumerate}
    \item The global  $S$-act $(A\otimes S,*)$ is an $A$-generated globalization of  $(A,\cdot)$ via the map $\delta\colon A\to A\otimes S$ given in \eqref{eq:delta}.
    \item The globalization $(A\otimes S,*)$ has the following universal property: if  $(B,\circ)$ is a globalization of $(A, \cdot)$ via a map $\iota\colon A\to B$ then there is a unique morphism of right $S$-acts 
    $\varphi\colon A\otimes S\to B$ such that $\iota = \varphi\delta$, that is, the following diagram commutes:
   \begin{center}
\begin{tikzcd}
A \arrow[r, "\delta"] \arrow[rd, "\iota"] & [4mm] A\otimes S \arrow[d, "\varphi", dashed]\\[5mm]
&[6mm] B
 \end{tikzcd}
 \end{center}
\end{enumerate} 
\end{theorem}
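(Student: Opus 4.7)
The plan is to verify the four requirements: that $\delta$ is well-defined and injective (the latter is Lemma \ref{lem:inj_delta}), that (G2) holds, that (G1) holds, and that $A\otimes S$ is $A$-generated. Well-definedness of $\delta$ was noted just before Lemma \ref{lem:inj_delta} and uses firmness directly. For (G2), given $a\cdot s$ defined, I would use unitarity to write $a=a'\cdot s'$; then (PA) gives $a'\cdot s's = (a'\cdot s')\cdot s = a\cdot s$, so $\delta(a\cdot s)= a'\otimes s's=(a'\otimes s')\ast s = \delta(a)\ast s$. For $A$-generation, given $a\otimes s\in A\otimes S$, if $a = a'\cdot s'$ then $(a',s's)\to (a,s)$ shows $a\otimes s = a'\otimes s's = \delta(a)\ast s$.

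\textbf{The key auxiliary claim for (G1).} The crux, which is really the content of the argument of Lemma \ref{lem:inj_delta} repackaged, is the following strengthening:
\begin{quote}
\emph{If $x\otimes u = y\otimes v$ in $A\otimes S$ and $x\cdot u$ is defined, then $y\cdot v$ is defined and $x\cdot u = y\cdot v$.}
\end{quote}
I would prove this by induction on the length of a chain connecting $(x,u)$ and $(y,v)$, carrying out exactly the two case analyses that appear in the proof of Lemma \ref{lem:inj_delta} (the single step $\to$ forward and the single step $\to$ backward), each time invoking strongness or (PA) to transport definedness of the action along the chain. Granted this claim, (G1) follows: the forward implication is (G2), and for the converse, if $\delta(a)\ast s = \delta(c)$ with $a=a'\cdot s'$ and $c=c'\cdot t'$, then $a'\otimes s's = c'\otimes t'$; since $c'\cdot t'$ is defined, the claim gives $a'\cdot s's$ defined and equal to $c$, and then strongness applied to $a'\cdot s'$ and $a'\cdot s's$ yields that $a\cdot s = (a'\cdot s')\cdot s$ is defined.

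\textbf{Plan for Part (2).} Given a globalization $(B,\circ)$ via $\iota$, I would define
\[
\varphi\colon A\otimes S\to B,\quad \varphi(a\otimes s) = \iota(a)\circ s.
\]
To check well-definedness it suffices, by induction on chain length, to verify invariance under the generating relation $\to$: if $(a,s)\to(b,t)$, so $s=ut$ and $a\cdot u = b$, then using (G2) we get $\iota(a)\circ s = (\iota(a)\circ u)\circ t = \iota(a\cdot u)\circ t = \iota(b)\circ t$. That $\varphi$ is a morphism of right $S$-acts is immediate from \eqref{eq:act1} and associativity of $\circ$. The identity $\varphi\delta = \iota$ follows directly: for $a=b\cdot t$, $\varphi\delta(a)=\varphi(b\otimes t)=\iota(b)\circ t = \iota(b\cdot t)=\iota(a)$ by (G2). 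Uniqueness of $\varphi$ is then a free consequence of Proposition \ref{prop:unique}\eqref{i:u2}, because $A\otimes S$ is $A$-generated by the part (1) verification.

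\textbf{Expected obstacle.} The only nontrivial point is the auxiliary claim used for (G1): extending the injectivity argument of Lemma \ref{lem:inj_delta} into a statement that simultaneously propagates \emph{definedness} of the action along the equivalence chain. Everything else is a direct unpacking of definitions, with firmness used only to make $\delta$ a function and strongness plus (PA) used to move the action along generating steps of $\rho$.
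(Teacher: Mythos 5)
Your proposal is correct and follows essentially the same route as the paper: $\delta$ is well-defined by firmness and injective by Lemma \ref{lem:inj_delta}, (G1) and (G2) are obtained from (S) together with the definedness-propagation argument (which the paper states tersely as ``$b\cdot st$ is defined $\iff$ $b\otimes st\in\delta(A)$'' and whose proof is exactly the chain argument inside Lemma \ref{lem:inj_delta}), $A$-generation comes from unitarity, and part (2) uses the same formula $\varphi(a\otimes s)=\iota(a)\circ s$ with well-definedness checked on generating steps of $\rho$ and uniqueness from Proposition \ref{prop:unique}\eqref{i:u2}. Your explicit isolation of the auxiliary claim for (G1) is a fair and accurate unpacking of what the paper leaves implicit.
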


\begin{proof}
(1) Note that $\delta(A) = \{a\otimes s\colon a\cdot s \text{ is defined}\}$. By Lemma \ref{lem:inj_delta}, the map $\delta$ is injective. Let $a\in A$. By (U), there are $b\in A$ and $s\in S$ such that $a=b\cdot s$. 
Due to (S), we have that $a\cdot t = (b\cdot s)\cdot t$ is defined if and only if $b\cdot st$ is defined which, in turn, is equivalent to  $b\otimes st \in \delta(A)$. If this is the case, we have $\delta (a\cdot t) = \delta((b\cdot s)\cdot t) = \delta(b\cdot st) = b\otimes st = (b\otimes s)*t = \delta(a)*t$, so that $*$ is a globalization of $\cdot$.
Since $a\otimes t = (b\cdot s)\otimes t = b\otimes st = (b\otimes s)*t = \delta(a)*t$, we see that $A\otimes S$ is $A$-generated.

(2) Let  $(B,\circ)$ be a globalization of $(A, \cdot)$ via  $\iota\colon A\to B$. For $a\otimes s\in A\otimes S$ we put $\varphi(a\otimes s) = \iota(a) \circ s$. To show that $\varphi$ is well defined,
assume that $a\otimes s = b\otimes t$ and show that $\iota(a)\circ s = \iota(b)\circ t$. Arguing by induction and due to symmetry, it suffices to assume that $(a,s)\to (b,t)$. Then there is $u\in S$ such that $s=ut$ and $a\cdot u$ is defined and equals $b$. We have
\begin{align*}
    \iota(a) \circ s & = \iota(a)\circ ut = (\iota(a)\circ u)\circ t & (\text{since } \circ \text{ is an action})\\
    & = \iota(a\cdot u)\circ t & (\text{by (G2)})\\
    & = \iota(b)\circ t, & (\text{since } a\cdot u=b)
\end{align*}
as desired. The map $\varphi$ is a morphism of $S$-acts because
\begin{align*}
    \varphi((a\otimes s)*t) & = \varphi(a\otimes st) & (\text{by the definition of } *)\\
    & =  \iota(a)\circ st & (\text{by the definition of } \varphi)\\
    & = (\iota(a)\circ s)\circ t & (\text{since } \circ \text{ is an action})\\
    & = \varphi(a\otimes s)\circ t. & (\text{by the definition of } \varphi)
\end{align*}
Further, let $a\in A$. Since $\cdot$ is unitary there are $b\in A$ and $s\in S$ such that the element $b\cdot s$ is defined and equals $a$. Then we have:
\begin{align*}
    \varphi(\delta(a)) & = \varphi(b\otimes s) & (\text{by the definition of } \delta)\\
    & =  \iota(b)\circ s & (\text{by the definition of } \varphi)\\
    & = \iota(b\cdot s) = \iota(a),& (\text{by (G2) and since } a=b\cdot s)\\
\end{align*}
so that $\varphi\delta=\iota$. Uniqueness of $\varphi$ follows from Proposition \ref{prop:unique}\eqref{i:u2}.
\end{proof}

\begin{corollary}\label{cor:initial} Let $\cdot$ be a firm and strong partial action of a semigroup $S$ on a set $A$. The triple $(A\otimes S, \delta, *)$ is an initial object in the category ${\mathcal{G}}(A,S,\cdot)$ and in the category ${\mathcal{G}}_A(A,S,\cdot)$. 
\end{corollary}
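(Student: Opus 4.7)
The plan is to observe that the corollary is essentially a repackaging of Theorem \ref{th:tensor} together with Proposition \ref{prop:unique}\eqref{i:u2}, once one checks the hypotheses needed to apply the latter. Recall that by Definition \ref{def:firm_partial} a firm partial action is in particular unitary, so Proposition \ref{prop:unique} applies to $(A,\cdot)$.

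First I would note that Theorem \ref{th:tensor}(1) already supplies $(A\otimes S, \delta, *)$ as an object of ${\mathcal{G}}(A,S,\cdot)$, and moreover shows that $A\otimes S$ is $A$-generated, so it also lies in the full subcategory ${\mathcal{G}}_A(A,S,\cdot)$. Next, for any object $(B,\iota,\circ)$ of ${\mathcal{G}}(A,S,\cdot)$, Theorem \ref{th:tensor}(2) produces a morphism $\varphi\colon (A\otimes S,\delta,*)\to (B,\iota,\circ)$ with $\varphi\delta=\iota$, which is exactly a morphism in ${\mathcal{G}}(A,S,\cdot)$.

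It remains to address uniqueness. Since $A\otimes S$ is $A$-generated, Proposition \ref{prop:unique}\eqref{i:u2} gives at once that there is at most one morphism from $(A\otimes S,\delta,*)$ to $(B,\iota,\circ)$ in ${\mathcal{G}}(A,S,\cdot)$. Combined with the existence from the previous step, this shows that $(A\otimes S,\delta,*)$ is initial in ${\mathcal{G}}(A,S,\cdot)$. Initiality in ${\mathcal{G}}_A(A,S,\cdot)$ is then automatic: the subcategory is full, the object lies in it, and the unique morphism supplied for every target in ${\mathcal{G}}(A,S,\cdot)$ restricts to a unique morphism for every target in ${\mathcal{G}}_A(A,S,\cdot)$.

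There is no real obstacle here; the only thing to be careful about is checking that firmness entails the hypothesis (unitarity) that Proposition \ref{prop:unique} requires, which is immediate from Definition \ref{def:firm_partial}.
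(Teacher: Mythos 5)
Your proposal is correct and matches the paper's (implicit) argument: the corollary is exactly the content of Theorem \ref{th:tensor} restated categorically, with existence from part (2), membership in ${\mathcal{G}}_A(A,S,\cdot)$ from the $A$-generation in part (1), and uniqueness via Proposition \ref{prop:unique}\eqref{i:u2} applied to the $A$-generated source --- which is precisely how the paper itself obtains uniqueness inside the proof of Theorem \ref{th:tensor}(2). Your observation that firmness includes unitarity (Definition \ref{def:firm_partial}), so that Proposition \ref{prop:unique} applies, is the right hypothesis check.
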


\begin{remark}\label{rem:mon} {\em Let $S$ be a monoid and $\cdot$ be its strong partial action in the sense of Hollings~\cite{H07} (that is, (PA), (Um) and (S) hold). By Lemma \ref{lem:monoid} $\cdot$ satisfies also (U). Now, from Lemma~\ref{lem:firm} we have that $\cdot$ is firm. It is easy to see that the globalization $(A\otimes S,*)$ of $(A,\cdot)$ from Theorem~\ref{th:tensor} coincides with that from \cite{MS04} and \cite{H07}.
Hence Theorem~\ref{th:tensor}(1) is a generalization of  \cite[Proposition 2.2]{MS04} (and of the construction in its proof). In addition, Corollary \ref{cor:initial} is a generalization of \cite[Theorem 5.10]{H07}.}
\end{remark}

\begin{remark}\label{rem:new}
{\em
Let $(A,\cdot)$ be a strong partial $S$-act and let $S^1$ be the monoid obtained from $S$ by attaching an external identity element $1$. Consider the partial $S^1$-act $(A,\circ)$ defined as follows. For all $a\in A$, $a\circ 1$ is defined and
equals $a$. If $s\neq 1$ then $a\circ s$ is defined if and only if $a\cdot s$ is defined in which case $a\circ s = a\cdot s$. Note that $(A,\circ)$ is firm since it is a partial monoid act and strong since so is $(A,\cdot)$.  Consider the tensor product $A\otimes S^1$ with the right action $*$ of $S^1$ on it given by \eqref{eq:act1}. By Theorem \ref{th:tensor}, it is an $A$-generated globalization of $(A,\circ)$. Since $1$ is an external identity element of $S^1$ the action  of $S^1$ on $A\otimes S^1$ restricts to the action of $S$ on $A\otimes S^1$, so that the global $S$-act $(A\otimes S^1,*)$ is a globalization of $(A,\cdot)$. It follows that any strong partial $S$-act can be globalized and thus the converse statement to that of Proposition \ref{prop:globstrong} holds true. }
\end{remark}

Let $(A,\cdot)$ be a strong partial $S$-act. It can be shown that there is a well-defined map $a\otimes s\mapsto a\otimes s$ from $A\otimes S$ to $A\otimes S^1$. Moreover, this map is surjective if and only if $(A,\cdot)$ is unitary, and injective if and only if $(A,\cdot)$ satisfies condition (F) of Definition \ref{def:firm_partial}. It follows that this map is bijective if and only if $(A,\cdot)$ is firm. In particular, for a firm and strong partial $S$-act $(A,\cdot)$ its globalizations $A\otimes S$ and $A\otimes S^1$ are isomorphic. We will not further develop this theme here.

\subsection{Firmness of the tensor product globalization}

\begin{definition}\label{def:firm_global} (Firm global actions)  A global action $*$ of a semigroup $S$ on a set $B$ is called {\em firm}, if the map  $\mu: B\otimes S\to B,  b\otimes s\mapsto b*s$, is bijective.
\end{definition}

The notion of a firm global action is a special case of that of a firm partial action, see Definition~\ref{def:firm_partial}.

Firm global acts are semigroup theoretic analogues of firm modules over rings, which probably appeared first in \cite[Definition~1.2]{Taylor}, but the term `firm' was introduced in \cite{Quillen}. They have been successfully used to develop Morita theory for semigroups without identity in \cite{Lawson2011} where the term `closed act' was used, in \cite{LMR18} where the term `firm act' first appeared and several subsequent papers. Somewhat earlier firm modules have been used in Morita theory of nonunital rings, for example in \cite{GM2000} and \cite{GM2001}. 

\begin{proposition}\label{prop:firm}
Let $S$ be factorizable (which means that $S=S^2$) and $A$ a firm and strong partial $S$-act. 
Then the global $S$-act $A\otimes S$ from Theorem \ref{th:tensor} is firm.
\end{proposition}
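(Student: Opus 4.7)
We need to show that $\mu\colon(A\otimes S)\otimes S\to A\otimes S$, $(a\otimes r)\otimes s\mapsto a\otimes rs$, is a bijection. Surjectivity is immediate from factorizability: given $a\otimes w\in A\otimes S$, write $w=w_1w_2$ with $w_1,w_2\in S$, and then $a\otimes w=\mu\bigl((a\otimes w_1)\otimes w_2\bigr)$. The substantive point is injectivity: whenever $a\otimes rs=a'\otimes r's'$ in $A\otimes S$, one must show $(a\otimes r)\otimes s=(a'\otimes r')\otimes s'$ in $(A\otimes S)\otimes S$.

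The plan is to unpack the hypothesis as a chain $(a,rs)=(c_0,w_0),\ldots,(c_n,w_n)=(a',r's')$ in $A\times S$ with consecutive entries $\to$-related (in either direction). For every intermediate $i$ pick, using $S=S^2$, an arbitrary factorization $w_i=p_iq_i$, and for the endpoints take $(p_0,q_0)=(r,s)$ and $(p_n,q_n)=(r',s')$. It then suffices to show that adjacent lifts $(c_i\otimes p_i)\otimes q_i$ and $(c_{i+1}\otimes p_{i+1})\otimes q_{i+1}$ are equal in $(A\otimes S)\otimes S$; induction on $n$ concludes, with the base case $n=0$ handled by the same device.

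The key move is to use unitarity of $A$ to collapse each lift to a normal form. For a forward step $(c_i,w_i)\to(c_{i+1},w_{i+1})$ with $w_i=uw_{i+1}$ and $c_i\cdot u=c_{i+1}$, present $c_i=c\cdot v$ unitarily, so that $c_i\otimes p_i=c\otimes vp_i$ in $A\otimes S$ via $(c,vp_i)\to(c\cdot v,p_i)$. Applying the defining relation $b\otimes s_1s_2=(b*s_1)\otimes s_2$ of $(A\otimes S)\otimes S$ absorbs $p_i$ into the second coordinate, giving $(c_i\otimes p_i)\otimes q_i=(c\otimes v)\otimes w_i$. Since (PA) yields $c\cdot vu=(c\cdot v)\cdot u=c_{i+1}$, so that $c_{i+1}\otimes p_{i+1}=c\otimes vup_{i+1}$, the same rewriting absorbs $up_{i+1}$ and produces $(c_{i+1}\otimes p_{i+1})\otimes q_{i+1}=(c\otimes v)\otimes uw_{i+1}=(c\otimes v)\otimes w_i$. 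The two lifts therefore coincide. The backward step is symmetric (collapsing both lifts to $(c\otimes v)\otimes w_{i+1}$ with $c_{i+1}=c\cdot v$), and the base case $n=0$ ($a=a'$, $rs=r's'$) collapses $(a\otimes r)\otimes s$ and $(a\otimes r')\otimes s'$ to a common $(c\otimes v)\otimes rs$.

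The principal obstacle to be aware of is that the intermediate factorizations $w_i=p_iq_i$ cannot be chosen consistently across a step: with no cancellation in $S$, there is no reason for $(p_i,q_i)$ and $(p_{i+1},q_{i+1})$ to be compatible under associativity alone. The unitarity trick above bypasses any such matching, since pulling the first tensor coordinate back to a fixed $c\otimes v$ lets the third coordinate be absorbed into the second, and only the product $w_i$ (or $w_{i+1}$) survives---which is exactly what the chain hypothesis controls.
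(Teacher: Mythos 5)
Your proof is correct and follows essentially the same route as the paper's: injectivity is reduced by induction and symmetry to a single $\to$-step, arbitrary factorizations of the intermediate products are supplied by $S=S^2$, and unitarity is used to rewrite each first coordinate as $c\cdot v$ so that the middle factor can be absorbed, leaving only the product $w_i$ that the chain hypothesis controls. The only (immaterial) difference is that you get surjectivity from factorizability of $S$ while the paper gets it from unitarity of $A$.
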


\begin{proof}
We need to show that the map $\mu\colon (A\otimes S)\otimes S\to A\otimes S$ given by $(a\otimes s)\otimes u\mapsto a\otimes su$ is bijective. From now on we write $a\otimes s\otimes u$ for $(a\otimes s)\otimes u$ using the fact that $(a\otimes s)\otimes u$ corresponds to $a\otimes (s\otimes u)$ under the canonical isomorphism $(A\otimes S) \otimes S \cong A\otimes (S \otimes S)$ which can be proved in a standard way. 
Let $a\otimes s \in A\otimes S$. Since $A$ is firm, it is unitary. Hence $a=b\cdot t$ for some $b\in A$ and $t\in S$. Then $a\otimes s = b\cdot t \otimes s = b\otimes ts = \mu(b\otimes t\otimes s)$. 
Thus $\mu$ is surjective. 

To show it is injective, assume that $a\otimes su = b\otimes tv$ in $A\otimes S$ and show that $a\otimes s \otimes u = b\otimes t\otimes v$ in $(A\otimes S)\otimes S$.
Suppose first that $(a,su) = (b,tv)$, that is, $a=b$ and $su = tv$.
Since $A$ is firm, condition (U) implies that there are $c\in A$ and $r\in S$ such that $c\cdot r$ is defined and equals $a$. Then
$a\otimes s\otimes u = c\cdot r \otimes s\otimes u = c\otimes rs\otimes u = c\otimes r \otimes su$ and similarly $a\otimes t \otimes v = c\otimes r \otimes tv$. It follows that $a\otimes s\otimes u = a\otimes t\otimes v$. 
Otherwise, $(b,tv)$ is obtained from $(a,su)$ in $n\geq 1$ steps. 

Arguing by induction and due to symmetry, it is enough to suppose that $c,d\in A$ and $x,y\in S$ are such that $(c,x)\to (d,y)$ and show that for any $p,q,s,t\in S$ such that $x=pq$ and $y=st$ we have $c\otimes p \otimes q = d\otimes s\otimes t$. (Such factorizations $x=pq$ and $y=st$ exist since $S$ is factorizable.) Since $A$ is firm, condition (U) implies that there are $c'\in A$ and $r\in S$ such that $c'\cdot r$ is defined and equals $c$. Since $(c,x)\to (d,y)$, there is $u\in S$ such that $x=uy=ust$, $c\cdot u$ is defined and $d=c\cdot u$. Applying the definition of $(A\otimes S)\otimes S$, we calculate:
\begin{multline*}
    c\otimes p\otimes q= c'\cdot r\otimes p\otimes q = c'\otimes rp \otimes q= c'\otimes r\otimes pq = c'\otimes r \otimes ust = \\
    c'\otimes rus \otimes t = (c'\cdot r)\cdot u \otimes s \otimes t = d\otimes s\otimes t, 
\end{multline*}
as needed.
\end{proof}

\subsection{The tensoring globalization functor and the reflection} 
In this subsection we extend the result of \cite[Theorem 1.1]{A03} (see also \cite{KhN18}) from partial actions of groups to firm and strong partial actions of semigroups. Let $\FSPACT(S)$ denote the category whose objects are firm and strong right partial $S$-acts and whose morphisms are morphisms between partial $S$-acts. Let $\FACT(S)$ be the full subcategory of $\FSPACT(S)$ whose objects are firm right global $S$-acts (the latter category has been considered in \cite{LMR18,Lawson2011}).

If $A$ is an object of the category $\FSPACT(S)$, by ${\mathrm{T}}(A)$ we denote the right global $S$-act $A\otimes S$. By Proposition \ref{prop:firm}, it is an object of the category $\FACT(S)$. Let $f\colon A\to B$ be a morphism in the category $\FSPACT(S)$. Define ${\mathrm{T}}(f)$ to be the map from $A\otimes S$ to $B\otimes S$ given by $a\otimes s \mapsto f(a) \otimes s$. It is easy to see that it is well defined and is a morphism of global $S$-acts. It is routine to check that the assignment $f\mapsto {\mathrm{T}}(f)$ is functorial, so that we have defined a functor ${\mathrm{T}} \colon \FSPACT(S) \to \FACT(S)$ which we call the {\em tensoring globalization functor.}

\begin{theorem}\label{th:reflective} Suppose that $S$ is factorizable.
The functor ${\mathrm{T}} \colon \FSPACT(S) \to \FACT(S)$ is a left adjoint to the inclusion functor ${\mathrm{I}}\colon \FACT(S)\to \FSPACT(S)$. Consequently, $\FACT(S)$ is a reflective subcategory of the category $\FSPACT(S)$ with the functor ${\mathrm{T}}$ being the reflector.
\end{theorem}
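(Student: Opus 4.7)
The plan is to prove directly that $\mathrm{T}$ is left adjoint to $\mathrm{I}$ by constructing, for every $A\in\FSPACT(S)$ and $B\in\FACT(S)$, a natural bijection
\[
\Phi_{A,B}\colon \mathrm{Hom}_{\FACT(S)}(\mathrm{T}(A),B) \longrightarrow \mathrm{Hom}_{\FSPACT(S)}(A,\mathrm{I}(B)),
\]
with unit at $A$ being the map $\delta_A\colon A\to A\otimes S$ from \eqref{eq:delta}. Once this is done, the claim that $\FACT(S)$ is a reflective subcategory with reflector $\mathrm{T}$ is automatic: since $\FACT(S)$ is a full subcategory of $\FSPACT(S)$, the inclusion $\mathrm{I}$ is fully faithful, and a left adjoint to a fully faithful functor is precisely a reflector.

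Concretely, I set $\Phi_{A,B}(\psi) = \psi\circ \delta_A$; this is a morphism in $\FSPACT(S)$ because $\delta_A$ already is by Theorem~\ref{th:tensor}(1) and $\psi$ is a morphism of global $S$-acts. For the inverse, given $f\colon A\to \mathrm{I}(B)$ in $\FSPACT(S)$, I define $\Phi^{-1}_{A,B}(f)(a\otimes s) = f(a)*s$, where $*$ denotes the global action on $B$. The key technical point is well-definedness on $\rho$-classes. By symmetry and induction on the number of steps, it suffices to treat a single step $(a,s)\to (b,t)$, i.e.\ $s=ut$ with $a\cdot u$ defined and equal to $b$; then
\[
f(a)*s = f(a)*(ut) = (f(a)*u)*t = f(a\cdot u)*t = f(b)*t,
\]
using that $f$ is a morphism of partial $S$-acts. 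This is the same pattern as the well-definedness argument in the proof of Theorem~\ref{th:tensor}(2), but now applied to an arbitrary morphism $f$, with no injectivity needed.

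The remaining verifications are routine. That $\Phi^{-1}_{A,B}(f)$ is a morphism of global $S$-acts follows from associativity of $*$. For the round-trips: writing $a=b\cdot u$ by unitarity of $A$, we get $\Phi_{A,B}(\Phi^{-1}_{A,B}(f))(a) = \Phi^{-1}_{A,B}(f)(b\otimes u) = f(b)*u = f(b\cdot u) = f(a)$; and since $A\otimes S$ is $A$-generated by Theorem~\ref{th:tensor}(1), the uniqueness in Proposition~\ref{prop:unique}\eqref{i:u2} implies that any morphism out of $A\otimes S$ is determined by its composition with $\delta_A$, yielding $\Phi^{-1}_{A,B}\Phi_{A,B}=\mathrm{id}$. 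Naturality in $A$ reduces to the identity $\mathrm{T}(g)\circ\delta_{A'}=\delta_A\circ g$ for $g\colon A'\to A$, which is immediate from the definition of $\delta$ and the fact that $g$ preserves defined products; naturality in $B$ is trivial.

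The step that will require the most care is the well-definedness of $\Phi^{-1}_{A,B}(f)$ outlined above; everything else is formal bookkeeping. Finally, I note that the factorizability hypothesis on $S$ enters only via Proposition~\ref{prop:firm}, which ensures $\mathrm{T}(A)\in\FACT(S)$ so that $\mathrm{T}$ is actually a functor with the stated codomain; the construction of the adjunction bijection itself makes no further use of factorizability.
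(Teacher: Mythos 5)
Your argument is correct and essentially the paper's own: the paper presents the adjunction via the unit $\eta_A=\delta_A$ and its universal property rather than an explicit hom-set bijection, but the transpose $a\otimes s\mapsto f(a)*s$, the one-step well-definedness computation, and the observation that factorizability enters only through Proposition~\ref{prop:firm} all coincide with the paper's proof. The only nit is that Proposition~\ref{prop:unique}\eqref{i:u2} is stated for globalizations, whereas $B$ here is an arbitrary firm global act, so for the uniqueness half of the round-trip you should instead write out directly (as the paper does) that an $S$-act morphism out of the $A$-generated act $A\otimes S$ is determined by its composite with $\delta_A$.
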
  

\begin{proof}  Let $A$ be an object of the category $\FSPACT(S)$. Define the map $\eta_A\colon A\to {\mathrm{IT}}(A) = {\mathrm{I}}(A\otimes S)$, $a\mapsto b\otimes s$, where $a=b\cdot s$. Let $a\cdot t$ be defined. Since $\eta_A(a\cdot t) = a\otimes t = (b\cdot s)\otimes t$ and $\eta_A(a)*t = (b\otimes s)*t = b\otimes st = (b\cdot s)\otimes t$, $\eta_A$ is a morphism in the category $\FSPACT(S)$, and it is routine to see it is natural in $A$. 

Let $(B,\circ)$ be an object of the category $\FACT(S)$ and $f\colon A\to {\mathrm{I}}(B)$ a morphism in $\FSPACT(S)$. Define a map of global $S$-acts $g\colon {\mathrm{T}}(A) \to B$ by $g(a\otimes s) = f(a)\circ s$. Let us show that $g$ is well defined. Arguing by induction and applying symmetry, it is enough to assume that $(a,s)\to (b,t)$, that is, $s=ut$ and $a\cdot u$  is defined and equals $b$. Then 
$$f(a)\circ s = f(a)\circ ut = (f(a)\circ u)\circ t = f(a\cdot u)\circ t = f(b)\circ t,$$ as needed. Further, $g$ is a morphism in the category $\FACT(S)$ since $$g((a\otimes s)* t) = g(a\otimes st)= f(a)\circ st = (f(a) \circ s)\circ t = g(a\otimes s)\circ t.$$ In addition, for each $a\in A$, where $a=b\cdot s$, we have ${\mathrm{I}}(g)\eta_A(a) = g(b\otimes s) = f(b)\circ s = f(b\cdot s) = f(a)$, thus we have the following commuting triangle: 
\begin{center}
\begin{tikzcd}
A \arrow[d, "f"] \arrow[rd, "\eta_A"] \\[6mm]
{\mathrm{I}}(B) &[5mm] {\mathrm{IT}}(A)\arrow[l, "{\mathrm{I}}(g)", dashed]
\end{tikzcd}
\end{center}
The map $g$ is unique because if $h\colon {\mathrm{T}}(A) \to B$ is another morphism in the category $\FACT(S)$ satisfying ${\mathrm{I}}(h)\eta_A = f$ then for $a=b\cdot s \in A$ and $t\in S$ we have 
\begin{align*}
h(a\otimes t) & = h(b\cdot s\otimes t) = h(b\otimes st) & (\text{by the definition of } A\otimes S)\\
    & = h((b\otimes s)*t) = h(\eta_A(a)*t)  & (\text{by the definition of } * \text{ and } \eta_A)\\
    & = h(\eta_A(a))\circ t = f(a)\circ t  & (\text{since } h \text{ is a morphism and 
    } h\eta_A=f)\\
    & = g(a\otimes t), &  (\text{by the definition of } g)\\
\end{align*}
as desired. This completes the proof.
\end{proof}

\begin{remark}\upshape Semigroups $S$ and $T$ are called {\em Morita equivalent} if the categories $\FACT(S)$ and $\FACT(T)$ are equivalent (see \cite{Lawson2011}). In view of the connection between categories of global and partial acts demonstrated in Theorem~\ref{th:reflective}, it would be interesting to examine the equivalence relation on the class of all semigroups which is defined by requiring that the categories $\FSPACT(S)$ and $\FSPACT(T)$ are equivalent. Even in the case of monoids it is not known if this coincides with the Morita equivalence relation.
\end{remark}

\section{The ${\mathrm{Hom}}$-set globalization}\label{s:hom}

Let $(A,\cdot)$, $(B,\circ)$ be partial $S$-acts. Generalizing Definition \ref{def:morphism} we define a {\em partial morphism} from $(A,\cdot)$ to $(B,\circ)$ to be a partial map $\varphi\colon A\to B$ which respects the action,  that is, if $a\cdot s$, $\varphi(a)$ and $\varphi(a\cdot s)$ are defined then $\varphi(a)\circ s$ is defined and the equality $\varphi (a\cdot s) = \varphi(a)\circ s$ holds. The set of all partial morphisms from $(A,\cdot)$ to $(B,\circ)$ will be denoted by ${\mathrm{Hom}}_p(A,B)$.

Let $(A,\cdot)$ be a right partial $S$-act and consider $S$ as a right $S$-act under the action by right multiplication. Then ${\mathrm{Hom}}_p(S,A)$ 
is the set of all partial maps $f\colon S\to A$ such that if $s,st \in {\mathrm{dom}}(f)$ then $f(s)\cdot t$ is defined and $f(st)=f(s)\cdot t$.

For all $f\in {\mathrm{Hom}}_p(S,A)$ and $s\in S$ define the partial map $f*s\colon S\to A$ by
\begin{equation}\label{eq:dom1}
{\mathrm{dom}}(f*s) = \{t\in S\colon st\in {\mathrm{dom}}(f)\},
\end{equation}
\begin{equation}\label{eq:dom2}
(f*s)(t) = f(st) \text{ for all }  t\in {\mathrm{dom}}(f*s).
\end{equation}

\begin{lemma}
The assignment $*$ defines a right global action of $S$ on ${\mathrm{Hom}}_p(S,A)$.
\end{lemma}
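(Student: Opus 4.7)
The proof has two parts: (a) for each $f \in {\mathrm{Hom}}_p(S,A)$ and $s \in S$, the partial map $f*s$ defined in \eqref{eq:dom1}--\eqref{eq:dom2} is again an element of ${\mathrm{Hom}}_p(S,A)$, so that $*$ is a well-defined map $(f,s) \mapsto f*s$ into ${\mathrm{Hom}}_p(S,A)$; and (b) the global-action axiom $(f*s)*t = f*(st)$ holds for all $s,t \in S$. No injectivity/domain-existence checks are needed because we are only asserting a global action on the whole set ${\mathrm{Hom}}_p(S,A)$.

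For part (a), fix $f \in {\mathrm{Hom}}_p(S,A)$ and $s \in S$. I would take $u, v \in S$ with $u \in {\mathrm{dom}}(f*s)$ and $uv \in {\mathrm{dom}}(f*s)$, and show that $(f*s)(u) \cdot v$ is defined and equals $(f*s)(uv)$. By \eqref{eq:dom1}, the hypotheses translate to $su \in {\mathrm{dom}}(f)$ and $s(uv) \in {\mathrm{dom}}(f)$. Using associativity of $S$, the second condition reads $(su)v \in {\mathrm{dom}}(f)$. Now $f \in {\mathrm{Hom}}_p(S,A)$ applied to $su$ and $(su)v$ yields that $f(su)\cdot v$ is defined and $f((su)v) = f(su)\cdot v$. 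Translating back via \eqref{eq:dom2}, this says $(f*s)(u)\cdot v$ is defined and $(f*s)(uv) = f(s(uv)) = f((su)v) = f(su)\cdot v = (f*s)(u)\cdot v$, as required.

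For part (b), I would show the two partial maps $(f*s)*t$ and $f*(st)$ have the same domain and agree on it. A chain of equivalences using \eqref{eq:dom1} and associativity of $S$ gives
\[
u \in {\mathrm{dom}}((f*s)*t) \iff tu \in {\mathrm{dom}}(f*s) \iff s(tu) \in {\mathrm{dom}}(f) \iff (st)u \in {\mathrm{dom}}(f) \iff u \in {\mathrm{dom}}(f*(st)).
\]
For any such $u$, applying \eqref{eq:dom2} twice and associativity of $S$ gives
\[
((f*s)*t)(u) = (f*s)(tu) = f(s(tu)) = f((st)u) = (f*(st))(u).
\]

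The only conceptual point is to keep clear that ${\mathrm{Hom}}_p(S,A)$ is defined by a partial-morphism condition (using condition (PA) on $A$ via $f$), so verifying (a) requires invoking precisely this condition of $f$ at the pair $(su, v)$ rather than the original pair $(u,v)$. Everything else is bookkeeping that falls out of associativity in $S$, so I do not expect a genuine obstacle.
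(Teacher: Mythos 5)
Your proof is correct and follows essentially the same route as the paper's: part (a) verifies the partial-morphism condition for $f*s$ by applying the defining condition of ${\mathrm{Hom}}_p(S,A)$ to the pair $(su,v)$, and part (b) checks equality of domains and of values using associativity, exactly as in the paper. No gaps.
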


\begin{proof}
First we check that $f*s \in {\mathrm{Hom}}_p(S,A)$. Suppose that $u,ut\in \dom(f*s)$. By \eqref{eq:dom1} this means that $su$, $sut \in \dom(f).$  It follows that $f(su)\cdot t$ is defined and equals $f(sut)$. Since, in addition, $(f*s)(u) =f(su)$ and $(f*s)(ut) =f(sut)$ by \eqref{eq:dom2}, we have that $(f*s)(u)\cdot t$ is defined and equals $(f*s)(ut)$, so $f*s \in {\mathrm{Hom}}_p(S,A)$.

Let $s,t\in S$ and show that $(f*s)*t = f*st$. Indeed,
$$
\dom((f*s)*t) = \{u\in S\colon tu\in \dom(f*s)\} = \{u\in S\colon stu\in \dom(f)\} = \dom(f*st)
$$
and for every $u\in \dom((f*s)*t)$ we have $((f*s)*t)(u) = (f*s)(tu) = f(stu) = (f*st)(u)$, as needed.
\end{proof}

For each $a\in A$ and $s\in S$ define the partial function $f_{a,s}\colon S\to A$ by
\begin{equation}\label{eq:dom3}
    {\mathrm{dom}}(f_{a,s}) = \{t\in S\colon a\cdot st \text{ is defined}\},
\end{equation}
\begin{equation}\label{eq:dom4}
f_{a,s}(t) = a\cdot st \text{ for all } t\in {\mathrm{dom}}(f_{s,a}).
\end{equation}
Let 
$$
A^S=\{f_{a,s}\colon a\in A, s\in S\}.
$$
\begin{lemma}  Suppose $\cdot$ is strong.
$A^S$ is a subact of ${\mathrm{Hom}}_p(S,A)$ and 
\begin{equation}\label{eq:act3}
f_{a,s}*t = f_{a,st}  \text{ for all } f_{a,s}\in A^S \text{ and } t\in S.
\end{equation}
\end{lemma}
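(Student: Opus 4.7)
My plan is to prove the two assertions together, starting by checking that each $f_{a,s}$ is actually a partial morphism $S \to A$ (so that $A^S \subseteq {\mathrm{Hom}}_p(S,A)$ at all), then verifying the identity $f_{a,s}*t = f_{a,st}$, which will give closure under $*$ for free. This is the only place where the hypothesis that $\cdot$ is strong is needed; the rest is direct computation from the definitions.

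\smallskip

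For the first step, fix $a \in A$ and $s \in S$, and suppose $u,\, uv \in \dom(f_{a,s})$, so that by \eqref{eq:dom3} both $a\cdot su$ and $a\cdot suv = a\cdot (su)v$ are defined. Since $\cdot$ is strong, condition (S) (applied to the element $a$ and semigroup elements $su$ and $v$) guarantees that $(a\cdot su)\cdot v$ is defined and equals $a\cdot (su)v$. Translating back via \eqref{eq:dom4}, this says exactly that $f_{a,s}(u)\cdot v$ is defined and equals $f_{a,s}(uv)$, so $f_{a,s} \in {\mathrm{Hom}}_p(S,A)$.

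\smallskip

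For the second step, I compute the domain of $f_{a,s}*t$ directly from \eqref{eq:dom1}:
\[
\dom(f_{a,s}*t) = \{u\in S : tu \in \dom(f_{a,s})\} = \{u\in S : a\cdot s(tu) \text{ is defined}\}.
\]
By associativity of the product in $S$ we have $s(tu) = (st)u$, so this set coincides with $\dom(f_{a,st})$. For any $u$ in this common domain, using \eqref{eq:dom2} and \eqref{eq:dom4},
\[
(f_{a,s}*t)(u) = f_{a,s}(tu) = a\cdot s(tu) = a\cdot (st)u = f_{a,st}(u).
\]
This establishes \eqref{eq:act3}. Since $f_{a,st} \in A^S$, it follows that $A^S$ is closed under the action $*$ of $S$ on ${\mathrm{Hom}}_p(S,A)$, hence is a subact.

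\smallskip

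I do not expect any real obstacle: once one notices that the only content is (i) using condition (S) to turn the formal ``partial map'' $f_{a,s}$ into a member of ${\mathrm{Hom}}_p(S,A)$, and (ii) repackaging associativity in $S$ into the formula $f_{a,s}*t = f_{a,st}$, the proof is essentially forced by the definitions. The slightly delicate point is just to remember where strongness enters, since the formula itself looks like pure semigroup bookkeeping.
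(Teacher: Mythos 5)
Your proof is correct and follows essentially the same route as the paper's: strongness is used exactly once, to show that $u,uv\in\dom(f_{a,s})$ forces $(a\cdot su)\cdot v$ to be defined and equal to $a\cdot suv$, and the identity $f_{a,s}*t=f_{a,st}$ is then obtained by the same domain-and-values computation from \eqref{eq:dom1}--\eqref{eq:dom4}. No issues.
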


\begin{proof} Let $f_{a,s}\in A^S$ and $u,v\in S$ be such that $f_{a,s}(u)$ and $f_{a,s}(uv)$ are defined. This means that $a\cdot su$ and $a\cdot suv$ are defined. Since $(A,\cdot)$ is strong,  $(a\cdot su)\cdot v=f_{a,s}(u)\cdot v$ is defined and equals $a\cdot suv$. Hence $f_{a,s}(uv) = a\cdot suv = (a\cdot su)\cdot v = f_{a,s}(u)\cdot v$. We have shown that $A^S\subseteq {\mathrm{Hom}}_p(S,A)$.
Observe that
\begin{align*}
    {\mathrm{dom}}(f_{a,s}*t) & = \{u\in S\colon tu\in {\mathrm{dom}}(f_{a,s})\} & (\text{by \eqref{eq:dom1}})\\
    & = \{u\in S\colon a\cdot stu \text{ is defined}\} & (\text{by \eqref{eq:dom3}})\\
    & = {\mathrm{dom}}(f_{a,st}). & (\text{by \eqref{eq:dom3}})
\end{align*}
In addition, if $u\in {\mathrm{dom}}(f_{a,st})$, we have
\begin{align*}
    (f_{a,s}*t)(u) & = f_{a,s}(tu) & (\text{by \eqref{eq:dom2}})\\
    & = a\cdot stu & (\text{by \eqref{eq:dom4}})\\
    & = f_{a,st}(u). & (\text{by \eqref{eq:dom4}})
\end{align*}
 We have shown that $f_{a,s}*t = f_{a,st}\in A^S$, which completes the proof.
\end{proof}

Suppose $\cdot$ is strong. For each $a\in A$ let $\lambda_a\colon S\to A$ be the map given by  ${\mathrm{dom}}(\lambda_a) = \{s\in S\colon a\cdot s \text{ is defined}\}$ and $\lambda_a(s) = a\cdot s$ for all $s\in {\mathrm{dom}}(\lambda_a)$. It is easy to see that $\lambda_a\in {\mathrm{Hom}}_p(S,A)$. We have defined the map $\lambda\colon A\to {\mathrm{Hom}}_p(S,A)$, $a\mapsto\lambda_a$.

\begin{lemma}\label{lem:as} Suppose that $\cdot$ is unitary and strong. For $a\in A$ let $b\in A$ and $s\in S$ be such that $a=b\cdot s$. Then $\lambda_a = f_{b,s}\in A^S$. Consequently, $$\lambda(A) = \{f_{b,s}\colon b\in A, s\in S \text{ and } b\cdot s \text{ is defined}\}.$$
\end{lemma}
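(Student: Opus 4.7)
The plan is to establish $\lambda_a = f_{b,s}$ as partial maps by verifying that their domains coincide and that they agree pointwise on this common domain, and then to deduce the description of $\lambda(A)$ from (U) applied to each element of $A$.

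For the first part, I unpack the definitions: $t \in {\mathrm{dom}}(\lambda_a)$ means that $a\cdot t$ is defined, while $t \in {\mathrm{dom}}(f_{b,s})$ means that $b\cdot st$ is defined. Since $a = b\cdot s$ by hypothesis, the claim reduces to showing that $(b\cdot s)\cdot t$ is defined if and only if $b\cdot st$ is defined, and that these are equal in that case. The forward implication is precisely axiom (PA) of Definition \ref{def:partial}, which also yields the equality $(b\cdot s)\cdot t = b\cdot st$. The reverse implication uses strongness: since $b\cdot s$ is defined (it equals $a$) and $b\cdot st$ is assumed defined, condition (S) of Definition \ref{def:su} gives that $(b\cdot s)\cdot t$ is defined and equals $b\cdot st$. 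Combining the two directions gives ${\mathrm{dom}}(\lambda_a) = {\mathrm{dom}}(f_{b,s})$, and for any $t$ in this common domain we then have $\lambda_a(t) = a\cdot t = (b\cdot s)\cdot t = b\cdot st = f_{b,s}(t)$. Hence $\lambda_a = f_{b,s}$, and in particular $\lambda_a \in A^S$.

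For the "consequently" part, I argue two inclusions. By unitarity (U), every $a \in A$ can be written as $a = b\cdot s$ for some $b\in A$ and $s \in S$, and then the first part shows $\lambda_a = f_{b,s}$ with $b\cdot s$ defined, giving the inclusion $\lambda(A) \subseteq \{f_{b,s}\colon b\cdot s \text{ is defined}\}$. Conversely, for any $b\in A$ and $s\in S$ with $b\cdot s$ defined, set $a := b\cdot s$; the first part then yields $f_{b,s} = \lambda_a \in \lambda(A)$. The one point that needs care — and it is the only nontrivial step — is that the two directions of the domain equality rely on different axioms: the inclusion ${\mathrm{dom}}(\lambda_a) \subseteq {\mathrm{dom}}(f_{b,s})$ is automatic from (PA), but the reverse inclusion genuinely needs strongness, since for a general partial action it is not true that definedness of $b\cdot st$ forces definedness of $(b\cdot s)\cdot t$.
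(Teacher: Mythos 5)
Your proof is correct and follows essentially the same route as the paper's: both establish $\lambda_a = f_{b,s}$ by showing the domains coincide --- using (PA) for one inclusion and (S) for the other, exactly as the paper's appeal to strongness encapsulates --- and then checking pointwise agreement via $a\cdot t = (b\cdot s)\cdot t = b\cdot st$. Your version merely spells out the two directions of the domain equality and the two inclusions for $\lambda(A)$ more explicitly than the paper does.
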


\begin{proof}
We have ${\mathrm{dom}}(\lambda_a) = \{t\in S\colon a\cdot t \text{ is defined}\}=\{t\in S\colon (b\cdot s)\cdot t \text{ is defined}\}$. Also, ${\mathrm{dom}}(f_{b,s}) = \{t\colon b\cdot st \text{ is defined}\}$. Since $b\cdot s$ is defined and $\cdot$ is strong, we have ${\mathrm{dom}}(\lambda_a) = {\mathrm{dom}}(f_{b,s})$. In addition, for each $t\in {\mathrm{dom}}(\lambda_a)$ we have $\lambda_a(t) = a\cdot t = (b\cdot s)\cdot t = b\cdot st = f_{b,s}(t)$.
\end{proof}

The following notion is an extension to partial $S$-acts of the notion of a nonsingular global $S$-act. Unitary nonsingular global acts have been used in the Morita theory of semigroups starting from \cite{CS}. The term `nonsingular act' was introduced in \cite[Definition~3.1]{LM16}.

\begin{definition}
A  partial $S$-act $A$ is called {\em nonsingular}, if from $f_{a,s} = f_{b,t}$, where $a\cdot s$ is defined, it follows that $b\cdot t$ is defined and $a\cdot s = b\cdot t$.
\end{definition}

\begin{lemma}\label{lem:closed} If $S$ is monoid  then any partial $S$-act $(A,\cdot)$ is nonsingular.
\end{lemma}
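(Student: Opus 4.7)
The plan is to exploit the identity element $1\in S$ in order to read off the value $a\cdot s$ (respectively $b\cdot t$) from the partial function $f_{a,s}$ (respectively $f_{b,t}$) by evaluating at $1$. The key observation is that $s1=s$ in a monoid, so the definition of $f_{a,s}$ immediately relates the point $1$ of its (potential) domain with the element $a\cdot s$.

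More precisely, I would argue as follows. Suppose $f_{a,s}=f_{b,t}$ and that $a\cdot s$ is defined. By \eqref{eq:dom3}, we have $1\in\dom(f_{a,s})$ if and only if $a\cdot s1=a\cdot s$ is defined, which holds by assumption; and by \eqref{eq:dom4}, $f_{a,s}(1)=a\cdot s1=a\cdot s$. Since $f_{a,s}$ and $f_{b,t}$ coincide as partial maps, they have the same domain, so $1\in\dom(f_{b,t})$, which by \eqref{eq:dom3} means that $b\cdot t1=b\cdot t$ is defined. Applying \eqref{eq:dom4} and the equality of the two partial maps then gives $b\cdot t=f_{b,t}(1)=f_{a,s}(1)=a\cdot s$, which is exactly what nonsingularity requires.

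There is no real obstacle here: the argument only uses that $1$ is a two-sided identity of $S$ and the defining equations \eqref{eq:dom3}, \eqref{eq:dom4} of the partial maps $f_{a,s}$. No appeal to strongness, unitarity, or any axiom beyond (PA) is needed, since equality of partial maps is being read off pointwise at a single element. Thus the proof reduces essentially to one line once one notices that evaluation of $f_{a,s}$ at $1\in S$ recovers the element $a\cdot s$.
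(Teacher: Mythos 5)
Your proof is correct and is essentially identical to the one in the paper: both evaluate the partial maps $f_{a,s}$ and $f_{b,t}$ at the identity $1$, noting that $1\in\dom(f_{a,s})$ precisely because $a\cdot s1=a\cdot s$ is defined, and then read off $a\cdot s=f_{a,s}(1)=f_{b,t}(1)=b\cdot t$. No changes needed.
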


\begin{proof}
Suppose that $s,t\in S$ and $a,b\in A$ are such that $f_{a,s} = f_{b,t}$ and that $a\cdot s$ is defined. Then $1\in {\mathrm{dom}}(f_{a,s})$. It follows that $1\in {\mathrm{dom}}(f_{b,t})$ which means that $b\cdot t$ is defined. Moreover, we have $a\cdot s = f_{a,s}(1) = f_{b,t}(1) = b\cdot t$. 
\end{proof}

Lemma \ref{lem:as} implies the following.

\begin{corollary}\label{cor:inj1}
Suppose that $\cdot$ is unitary, nonsingular and strong. Then the map $\lambda\colon A\to A^S$, $a\mapsto \lambda_a$, is injective.
\end{corollary}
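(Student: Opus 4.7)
The plan is to unwind the definitions and invoke Lemma \ref{lem:as} together with the nonsingularity hypothesis. First, I would start by supposing that $a, a' \in A$ satisfy $\lambda_a = \lambda_{a'}$, and aim to show $a = a'$.

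Next, I would use the unitary assumption (U) to write $a = b \cdot s$ and $a' = b' \cdot s'$ for some $b, b' \in A$ and $s, s' \in S$. Since $\cdot$ is strong, Lemma \ref{lem:as} applies and gives $\lambda_a = f_{b,s}$ and $\lambda_{a'} = f_{b',s'}$ in $A^S$. Combined with the hypothesis, this yields $f_{b,s} = f_{b',s'}$.

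Finally, I would invoke nonsingularity: since $b \cdot s$ is defined (it equals $a$), the definition forces $b' \cdot s'$ to be defined and $b \cdot s = b' \cdot s'$; that is, $a = a'$, completing the proof.

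There is no real obstacle here; the statement is essentially a repackaging of Lemma \ref{lem:as} in the presence of the nonsingularity axiom, which is designed precisely to ensure that different presentations $a = b \cdot s$ collapse to the same element via $\lambda$.
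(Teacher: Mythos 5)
Your proof is correct and is exactly the argument the paper intends: the paper states the corollary as an immediate consequence of Lemma \ref{lem:as}, and your unwinding (write $a=b\cdot s$, $a'=b'\cdot s'$, identify $\lambda_a=f_{b,s}$, $\lambda_{a'}=f_{b',s'}$, then apply nonsingularity) is precisely that implication spelled out.
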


We arrive at the main result of this section.

\begin{theorem} \label{th:hom} Let $S$ be a semigroup and $(A,\cdot)$ a unitary, nonsingular and strong partial $S$-act. Then:
\begin{enumerate}
    \item The global $S$-act $(A^S,*)$ is an $A$-generated globalization of $(A,\cdot)$ via the map $\lambda\colon A\to A^S$ given by $a \mapsto f_{b,s}$ where $a=b\cdot s$.
    \item The globalization $(A^S,*)$ has the following universal property: if $(B, \circ)$ is an $A$-generated globalization of $(A,\cdot)$ via a map $\iota\colon A\to B$ then there is a unique morphism of $S$-acts $\varphi\colon B\to A^S$ such that $\lambda = \varphi\iota$, that is, the following diagram commutes:
   \begin{center}
\begin{tikzcd}
A \arrow[r, "\lambda"] \arrow[rd, "\iota"] & [4mm] A^S \\[6mm]
&[6mm] B\arrow[u, "\varphi", dashed]
 \end{tikzcd}
 \end{center}
\end{enumerate} 
\end{theorem}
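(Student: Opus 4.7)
The plan is to verify (1) by unpacking definitions, noting the pointwise identity $\lambda_a * s = f_{a,s}$, and then to construct $\varphi$ in (2) by an intrinsic formula that avoids any choice of factorisation.

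For (1), I would start from the identity
\[
\lambda_a * s \;=\; f_{a,s},
\]
which holds because both sides are partial maps $S\to A$ with domain $\{t\in S : a\cdot st \text{ is defined}\}$ sending $t$ to $a\cdot st$. By Lemma~\ref{lem:as} the map $\lambda$ takes values in $A^S$, and by Corollary~\ref{cor:inj1} it is injective. The identity above yields $A$-generation of $A^S$ directly, since every $f_{a,s}$ equals $\lambda(a)*s$. It also supplies (G2): writing $a = b\cdot t$ via unitarity and invoking (PA), one has $a\cdot s = b\cdot ts$ whenever the left side is defined, and Lemma~\ref{lem:as} then gives $\lambda(a\cdot s) = f_{b,ts} = f_{b,t}*s = \lambda(a)*s$. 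The forward direction of (G1) is (G2). For the converse I would assume $\lambda(a)*s = \lambda(c)$ with $a = b\cdot t$ and $c = d\cdot u$, so that $f_{b,ts} = f_{d,u}$ with $d\cdot u$ defined, apply nonsingularity to conclude that $b\cdot ts$ is defined, and then apply strongness to promote this to $a\cdot s = (b\cdot t)\cdot s$ being defined.

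For (2), given an $A$-generated globalization $(B,\iota,\circ)$, I would define $\varphi\colon B\to A^S$ intrinsically by
\[
\dom\varphi(b) = \{t\in S : b\circ t \in \iota(A)\}, \qquad \varphi(b)(t) = \iota^{-1}(b\circ t),
\]
which makes sense because $\iota$ is injective. To confirm that $\varphi(b)\in A^S$, I would invoke $A$-generation and write $b = \iota(a)\circ s$; the conditions (G1) and (G2) for $(B,\circ)$ identify $\dom\varphi(b)$ with $\{t : a\cdot st \text{ defined}\}$ and $\varphi(b)(t)$ with $a\cdot st$, forcing $\varphi(b) = f_{a,s}$. Checking that $\varphi$ commutes with the $S$-action and that $\varphi\iota = \lambda$ then amounts to substituting the definitions, using that $\iota(a\cdot t) = \iota(a)\circ t$ when $a\cdot t$ is defined. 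Uniqueness of $\varphi$ is immediate from Proposition~\ref{prop:unique}\eqref{i:u2}, since $B$ is $A$-generated.

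The main obstacle is the reverse direction of (G1): equality of two elements $f_{b,ts} = f_{d,u}$ of $A^S$ does not a priori force the equality of their evaluation points $b\cdot ts = d\cdot u$, let alone the fact that $b\cdot ts$ is defined at all. This is precisely what the nonsingularity hypothesis supplies, and its role here is what singles it out as an assumption of the theorem. A secondary subtlety is to define $\varphi$ in a manifestly choice-free way; the intrinsic formula via $\iota^{-1}$ sidesteps this, and the identification $\varphi(\iota(a)\circ s) = f_{a,s}$ is what then guarantees that the image lies in $A^S$ rather than merely in ${\mathrm{Hom}}_p(S,A)$.
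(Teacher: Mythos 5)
Your proof is correct and follows the same overall strategy as the paper's, but with two genuine organizational differences worth noting. In part (1) you exploit the identity $\lambda_a * s = f_{a,s}$, valid already in ${\mathrm{Hom}}_p(S,A)$ by comparing domains and values, which gives $A$-generation immediately; the paper instead derives $f_{a,s}=\lambda(a)*s$ by writing $a=b\cdot t$ and computing $f_{b\cdot t,s}=f_{b,ts}=f_{b,t}*s$ via (S). In part (2) you define $\varphi$ intrinsically by $\varphi(b)(t)=\iota^{-1}(b\circ t)$ on $\{t : b\circ t\in\iota(A)\}$, which is choice-free by construction, whereas the paper sets $\varphi(\iota(a)\circ s)=f_{a,s}$ on representatives and must then verify independence of the representation; your route converts that well-definedness check into the verification that $\varphi(b)\in A^S$ (using $A$-generation together with (G1) and (G2) to identify $\varphi(\iota(a)\circ s)$ with $f_{a,s}$), and it makes equivariance of $\varphi$ and the identity $\varphi\iota=\lambda$ essentially automatic. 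The substantive inputs --- nonsingularity plus strongness for the reverse direction of (G1), injectivity of $\iota$ and of $\lambda$, and Proposition \ref{prop:unique}\eqref{i:u2} for uniqueness --- are used exactly as in the paper, and you correctly isolate the reverse direction of (G1) as the one place where nonsingularity is indispensable.
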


\begin{proof}
(1) We first show that  $(A^S,*)$ is a globalization of $(A,\cdot)$. By Corollary \ref{cor:inj1}, the map $\lambda$ is injective. Let $a=b\cdot t\in A$. For $s\in S$ we have that $a\cdot s$ is defined if and only if $(b\cdot t)\cdot s$ is defined. Due to (S), this is equivalent to $b\cdot ts$ being defined which implies $\lambda_a*s=f_{b,t}*s = f_{b,ts} = \lambda_{b\cdot ts} \in \lambda(A)$. 
Conversely, if $f_{b,ts}\in \lambda(A)$, nonsingularity implies that $b\cdot ts$ is defined and so $a\cdot s$ is defined by (S). If this is the case, we have $\lambda(a\cdot s) = \lambda(b\cdot ts) = f_{b,ts} = f_{b,t}*s = \lambda(a)*s$, as needed.

Let $f_{a,s}\in A^S$. Since $\cdot$ is unitary, there are $b\in A$ and $t\in S$ such that $b\cdot t$ is defined and equals $a$. Then $f_{a,s} = f_{b\cdot t,s} = f_{b, ts} = f_{b,t}*s = \lambda(a)*s$, where for the second equality we used (S), so that $A^S$ is $A$-generated.

(2)  Let $(B, \circ)$ be an $A$-generated globalization of $(A,\cdot)$ via  $\iota\colon A\to B$ and $b\in B$. Since $B$ is $A$-generated, there are $a\in A$ and $s\in S$ such that $b=\iota(a)\circ s$. We put $\varphi(\iota(a)\circ s) = f_{a,s}$.
Firstly, we show that $\varphi$ is well defined. Assume that $\iota(a)\circ s = \iota(b)\circ t$ and show that $f_{a,s} = f_{b,t}$. Let $x\in {\mathrm{dom}}(f_{a,s})$. This means that $a\cdot sx$ is defined. In view of (G1) and (G2), we have that $\iota(a)\circ sx \in \iota(A)$ and $\iota(a\cdot sx)= \iota(a)\circ sx$. Our assumption yields that $\iota(b)\circ tx = (\iota(b)\circ t)\circ x = (\iota(a)\circ s) \circ x = \iota(a)\circ sx$. Thus  $\iota(b)\circ tx \in \iota(A)$. In view of (G1), we obtain that $b\cdot tx$ is defined. It follows that  ${\mathrm{dom}}(f_{a,s}) \subseteq {\mathrm{dom}}(f_{b,t})$. By symmetry, the reverse inclusion also holds. Let $x\in {\mathrm{dom}}(f_{a,s})$. Then $f_{a,s}(x)= a\cdot sx$ and $f_{b,t}(x) = b\cdot tx$. Applying (G2), we have $\iota(a\cdot sx) = \iota(a)\circ sx = \iota(b) \circ tx = \iota (b\cdot tx)$. Since $\iota$ is injective, we conclude that $a\cdot sx = b\cdot tx$, so that $f_{a,s}(x) = f_{b,t}(x)$. This implies that $f_{a,s} = f_{b,t}$.

The map $\varphi$ is a morphism of $S$-acts because
\begin{align*}
    \varphi((\iota(a)\circ s)\circ t) & = \varphi(\iota(a)\circ st) &(\text{since } \circ \text{ is an action})\\
    & =  f_{a,st} & (\text{by the definition of } \varphi)\\
    & = f_{a,s}*t & (\text{by } \eqref{eq:act3})\\
    & = \varphi(\iota(a)\circ s) *t. & (\text{by the definition of } \varphi)
\end{align*}

Further, let $a\in A$. Since $\cdot$ is unitary there are $b\in A$ and $s\in S$ such that the element $b\cdot s$ is defined and equals $a$. Then 
$\varphi(\iota(a)) = \varphi(\iota(b)\circ s) = f_{b,s} = \lambda_a$, 
so that $\varphi\iota=\lambda$. Uniqueness of $\varphi$ follows from Proposition \ref{prop:unique}\eqref{i:u2}.
\end{proof}

\begin{remark}
{\em Let $(A,\cdot)$ be a strong partial $S$-act and let $S^1$ be the monoid obtained from $S$ by attaching an external identity element $1$. Consider the partial $S^1$-act $(A,\circ)$ defined in Remark \ref{rem:new}. Then it is strong, since so is $(A,\cdot)$, and also unitary and nonsingular by (Um) and Lemma~\ref{lem:closed}. It follows from Theorem \ref{th:hom} that the global $S^1$-act $(A^{S^1}, *)$  is a globalization of the partial $S^1$-act $(A,\circ)$. Since $1$ is an external identity element of $S^1$, the action of $S^1$ on $A^{S^1}$ restricts to that of $S$ on $A^{S^1}$ which globalizes the initial partial action $\cdot$ of $S$ on $A$. This provides another consturction, in addition to that in Remark \ref{rem:new}, of globalization of an arbitrary strong partial semigroup act.}
\end{remark}

Let $(A,\cdot)$ be a strong partial $S$-act. Let $C$ be the subset of $A^{S^1}$  consisting of all $f \in A^{S^1}$ which can be written as $f=f_{a,s}$ where $s\neq 1$. Then the assignment $f_{a,s}\mapsto f_{a,s}$ (where $a\in A$ and $s\in S$) is a well-defined surjective map from $C$ onto $A^S$. It can be shown that this map is injective if and only if $(A,\cdot)$ is nonsingular, and that its domain $C$ coincides with the whole $A^{S^1}$ if and only if $(A,\cdot)$ is unitary. Hence, it is an isomorphism between $A^{S^1}$ and $A^S$ if and only if $(A,\cdot)$ is both unitary and nonsingular. 

\begin{corollary}\label{cor:terminal}
Let $\cdot$ be a unitary, nonsingular and strong partial action of a semigroup $S$ on a set $A$. The triple $(A^S, \lambda, *)$ is a terminal object in the category  ${\mathcal{G}}_A(A,S,\cdot)$. 
\end{corollary}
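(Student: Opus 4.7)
The plan is to observe that the corollary is essentially a repackaging of Theorem \ref{th:hom}, so the proof will be very short. I would first check that $(A^S, \lambda, *)$ actually lives in the category ${\mathcal{G}}_A(A,S,\cdot)$: part (1) of Theorem \ref{th:hom} says exactly that $(A^S,*)$ is an $A$-generated globalization of $(A,\cdot)$ via $\lambda$, and injectivity of $\lambda$ is Corollary \ref{cor:inj1}, so the triple $(A^S,\lambda,*)$ is an object of ${\mathcal{G}}_A(A,S,\cdot)$.

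Next, I would unpack what it means for this triple to be terminal. By definition of morphisms in ${\mathcal{G}}(A,S,\cdot)$, terminality of $(A^S,\lambda,*)$ in the full subcategory ${\mathcal{G}}_A(A,S,\cdot)$ means that for every object $(B,\iota,\circ)$ of ${\mathcal{G}}_A(A,S,\cdot)$ there exists a unique $S$-act morphism $\varphi\colon B\to A^S$ satisfying $\varphi\iota=\lambda$. Existence of such a $\varphi$ is exactly part (2) of Theorem \ref{th:hom}. Uniqueness follows because $(B,\iota,\circ)$ is $A$-generated, so Proposition \ref{prop:unique}\eqref{i:u2} guarantees that any two such morphisms coincide (this is also already noted in the proof of Theorem \ref{th:hom}).

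The only thing to be slightly careful about is the direction of the morphism compared with the initial-object corollary: in Theorem \ref{th:tensor} the universal arrow goes \emph{from} $A\otimes S$, whereas in Theorem \ref{th:hom} the universal arrow goes \emph{into} $A^S$, which is precisely what is needed for terminality. So no further argument is required; the corollary follows by simply assembling the pieces. I do not expect any genuine obstacle here — this is a straightforward translation between the universal property proved in Theorem \ref{th:hom} and the categorical language of terminal objects.
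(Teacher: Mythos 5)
Your proposal is correct and matches the paper's intended derivation: the corollary is stated as an immediate consequence of Theorem \ref{th:hom}, with objecthood coming from part (1) together with Corollary \ref{cor:inj1}, existence of the universal arrow from part (2), and uniqueness from Proposition \ref{prop:unique}\eqref{i:u2} applied to the $A$-generated source $B$. Nothing further is needed.
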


A partial action $\cdot$ of $S$ on $A$ is called a {\em partially defined action}~\cite{K15} (or an {\em incomplete action}~\cite{GH09}) if it satisfies the condition that $a\cdot s$ and $(a\cdot s)\cdot t$ are defined if and only if $a\cdot st$ is defined (this is the case if and only if the corresponding premorphism $S\to {\mathcal{PT}}(A)$, where ${\mathcal{PT}}(A)$ is the partial transformation semigroup on $A$, is a homomorphism, see \cite{GH09, K15}). A partially defined action is necessarily strong. If $\cdot$ is a partially defined action, we say that $(A,\cdot)$ is a {\em partially defined} $S$-{\em act.}

Let $(A,\cdot)$ be a partially defined $S$-act and $c\not\in A$. It is known (see, e.g., \cite[p. 297]{H07} for the case where $S$ is a monoid) and easy to see that the assignment
 \begin{equation*}\label{eq:glob}
 a\circ s = \left\lbrace\begin{array}{ll} a\cdot s, & \text{ if } a\in A \text{ and } a\cdot s \text{ is defined,}\\
 c, & \text{ if } a\in A \text{ and } a\cdot s \text{ is not defined,}\\
 c, & \text{ if } a=c,
 \end{array} \right.
 \end{equation*}
defines a global $S$-act $(A\cup\{c\},\circ)$ which is a globalization of $(A, \cdot)$ via the map $\iota\colon A\to A\cup\{c\}$ which is identical on $A$. 

Let $0$  denote the only partial function $f\colon S\to A$ whose domain is the empty set.

\begin{proposition}\label{prop:isom1} Let $\cdot$ be a nonsingular and unitary partially defined action of $S$ on $A$ which is not a global action. 
Then $A^S= \lambda(A)\cup \{0\}$ and the universal globalization $(A^S, *)$ is isomorphic to $(A\cup \{c\}, \circ)$ via the map $f_{a,s} \mapsto a\cdot s$ if $f_{a,s}\in \lambda(A)$ and $0\mapsto c$.
\end{proposition}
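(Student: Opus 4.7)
The plan is to first establish the set equality $A^S=\lambda(A)\cup\{0\}$, then define $\varphi\colon A^S\to A\cup\{c\}$ by the rule in the statement, and finally verify that $\varphi$ is a well-defined bijective morphism of right $S$-acts (which for $S$-acts automatically yields an isomorphism). For the set equality, I would take $f_{a,s}\in A^S$ and distinguish two cases: if ${\mathrm{dom}}(f_{a,s})=\emptyset$ then $f_{a,s}=0$; otherwise some $a\cdot st$ is defined, which via the defining iff of a partially defined action forces $a\cdot s$ to be defined, so Lemma~\ref{lem:as} gives $f_{a,s}=\lambda_{a\cdot s}\in\lambda(A)$. The reverse inclusion is immediate: $\lambda(A)\subseteq A^S$ trivially, and the non-global hypothesis supplies some pair $a,s$ with $a\cdot s$ undefined, so by the same iff $a\cdot st$ is undefined for every $t\in S$ and hence $f_{a,s}=0\in A^S$.

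Next I would record that $0\notin\lambda(A)$, which makes the proposed rule unambiguous; this is where nonsingularity enters. Assuming $\lambda_a=0$, I use unitarity to write $a=b\cdot s$, so $f_{b,s}=0$ with $b\cdot s$ defined, and I use non-globality to pick $c',u\in S$ with $c'\cdot u$ undefined, so that $f_{c',u}=0=f_{b,s}$; nonsingularity then forces $c'\cdot u$ to be defined, a contradiction. With disjointness in hand, $\varphi$ is unambiguously defined by $\varphi(f_{a,s})=a\cdot s$ whenever $f_{a,s}\in\lambda(A)$ and $\varphi(0)=c$; well-definedness on $\lambda(A)$ is again nonsingularity applied to two representations $f_{a,s}=f_{b,t}$ whose products are both defined. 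Bijectivity is then clear: $\varphi|_{\lambda(A)}$ is a two-sided inverse of the bijection $\lambda\colon A\to\lambda(A)$ (Corollary~\ref{cor:inj1} together with the definition of $\lambda(A)$), and the single extra pair $0\mapsto c$ matches the remaining points.

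The step I expect to require the most care is the action-compatibility identity $\varphi(f*t)=\varphi(f)\circ t$, which I would split by cases. For $f=0$ both sides equal $c$, using that $0*t=0$ (its domain is empty) and $c\circ t=c$. For $f=f_{a,s}\in\lambda(A)$ with $a\cdot s$ defined, \eqref{eq:act3} yields $f*t=f_{a,st}$; the partially-defined iff then tells us that $a\cdot st$ is defined precisely when $(a\cdot s)\cdot t$ is, and in that case both sides of the morphism identity equal the common value $a\cdot st=(a\cdot s)\cdot t$, whereas if they are undefined then $f_{a,st}=0$ and $(a\cdot s)\circ t=c$, matching once more.

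The main obstacle is really just the bookkeeping of domains and the systematic use of the partially-defined iff in both directions; the only conceptually delicate point is the disjointness step, which genuinely requires the non-global hypothesis---without it, the witness $c',u$ might not exist and $\varphi$ could fail to be well defined.
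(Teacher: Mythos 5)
Your proposal is correct and follows the same route as the paper: the set equality $A^S=\lambda(A)\cup\{0\}$ is obtained exactly as in the paper's proof (any $f_{a,s}$ with $a\cdot s$ undefined equals $0$ by the partially-defined iff, and Lemma~\ref{lem:as} handles the rest), while the isomorphism verification, which the paper dismisses as routine, is carried out by you in full and correctly --- including the genuinely necessary observation that $0\notin\lambda(A)$, which, as you note, uses nonsingularity together with the non-globality hypothesis.
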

\begin{proof}
Let $a\in A$ and $s\in S$ be such that $a\cdot s$ is not defined (they exist since $\cdot$ is not global).  Then, for any $t\in S$, $a\cdot st$ is not defined either, so that $t\not\in {\mathrm{dom}}(f_{a,s})$. Thus $f_{a,s}=0$. This and Lemma \ref{lem:as} imply the  equality $A^S= \lambda(A)\cup \{0\}$. The claim about the isomorphism is routine to verify.
\end{proof}

\section{Globalizations of a strong partial action of a monoid}\label{s:monoids}

In this section $S$ is a monoid. Combining Corollary \ref{cor:initial} and Corollary \ref{cor:terminal} we obtain the following.

\begin{theorem}\label{th:monoid}
Let $\cdot$ be a strong partial action of a monoid $S$ on a set $A$ in the sense of Hollings \cite{H07} (that is, conditions (PA), (Um) and (S) hold). Then $A\otimes S$ is an initial object and $A^S$ is a terminal object in the category ${\mathcal{G}}_A(A,S,\cdot)$. In particular, if $*$ is a global action of $S$ on a set $B$ which is an $A$-generated globalization of $\cdot$ via a map $\iota\colon A\to B$ then there are unique morphisms of global $S$-acts $A\otimes S\to B$, $a\otimes s\mapsto \iota(a)*s$, and $B\to A^S$, $\iota(a)*s\mapsto f_{a,s}$, such that the following diagram commutes:

\begin{center}
 \begin{tikzcd}
& [3mm] A \arrow[ld, "\delta" above=3pt] \arrow[d, "\iota"] \arrow[rd, "\lambda"]  & [4mm] \\[5mm]
A\otimes S \arrow[r] &[4mm] B\arrow[r] & [4mm] A^S
 \end{tikzcd}   
\end{center}
\end{theorem}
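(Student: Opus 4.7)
The plan is to verify that under the monoid hypothesis all assumptions needed for Corollary~\ref{cor:initial} and Corollary~\ref{cor:terminal} are satisfied, and then assemble the two universal morphisms into the required commutative diagram.

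First I would check that $(A,\cdot)$ fits simultaneously into both frameworks. By assumption $(A,\cdot)$ satisfies (PA), (Um) and (S), so by Lemma~\ref{lem:monoid} it satisfies (U), hence is unitary. Since $S$ is a monoid, Lemma~\ref{lem:firm} yields condition (F), and therefore $(A,\cdot)$ is firm. By Lemma~\ref{lem:closed} it is also nonsingular. Thus $(A,\cdot)$ is at once firm, strong, unitary, and nonsingular, so both Corollary~\ref{cor:initial} and Corollary~\ref{cor:terminal} apply. Corollary~\ref{cor:initial} gives that $(A\otimes S,\delta,*)$ is initial in ${\mathcal{G}}_A(A,S,\cdot)$, and Corollary~\ref{cor:terminal} gives that $(A^S,\lambda,*)$ is terminal in the same category.

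Next, for an object $(B,\iota,*)$ of ${\mathcal{G}}_A(A,S,\cdot)$, initiality of $A\otimes S$ furnishes a unique morphism $A\otimes S\to B$ in ${\mathcal{G}}_A(A,S,\cdot)$. Inspecting the proof of Theorem~\ref{th:tensor}(2), this morphism is given by $a\otimes s\mapsto\iota(a)*s$. Similarly, terminality of $A^S$ furnishes a unique morphism $B\to A^S$ in ${\mathcal{G}}_A(A,S,\cdot)$, and the proof of Theorem~\ref{th:hom}(2) shows it sends $\iota(a)*s$ to $f_{a,s}$ (well-definedness of this assignment being exactly the content of that proof; $A$-generation of $B$ supplies the requisite decomposition $b=\iota(a)*s$).

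Commutativity of the displayed triangle is then automatic: every morphism in ${\mathcal{G}}_A(A,S,\cdot)$ by definition intertwines the structure maps from $A$, so the composite $A\xrightarrow{\delta}A\otimes S\to B$ equals $\iota$ and the composite $A\xrightarrow{\iota}B\to A^S$ equals $\lambda$. I would close by remarking that the composition $A\otimes S\to B\to A^S$ must, by initiality of $A\otimes S$ paired with terminality of $A^S$, coincide with the canonical morphism $a\otimes s\mapsto f_{a,s}$. I do not expect any serious obstacle: the entire argument is a bookkeeping exercise assembling already-proven universal properties, and the only mild care required is to confirm that the explicit formulas for the two morphisms are precisely those produced by the constructions in Theorems~\ref{th:tensor} and~\ref{th:hom}.
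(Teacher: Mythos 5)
Your proposal is correct and follows essentially the same route as the paper: the paper derives the theorem by "combining Corollary~\ref{cor:initial} and Corollary~\ref{cor:terminal}", and the preparatory checks you make (condition (U) via Lemma~\ref{lem:monoid}, firmness via Lemma~\ref{lem:firm}, nonsingularity via Lemma~\ref{lem:closed}) are exactly the ones the paper records in Remark~\ref{rem:mon} and Lemma~\ref{lem:closed}. The identification of the explicit formulas with the morphisms produced in Theorems~\ref{th:tensor}(2) and~\ref{th:hom}(2) is likewise how the paper obtains the displayed maps.
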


Theorem \ref{th:monoid} says that $A\otimes S$ is the `freest' possible $A$-generated globalization of $A$, and $A^S$ is the `smallest' possible such globalization.

\begin{remark} {\em The globalization $A\otimes S$ coincides with that considered by  Megrelishvili and Schr\"oder \cite{MS04} and Hollings \cite{H07} (although the tensor product notation is not used in \cite{MS04,H07}), see Remark \ref{rem:mon}.  The globalization $A^S$ is novel, as well as its property of being the terminal object of the category ${\mathcal{G}}_A(A,S,\cdot)$.}
\end{remark}

Let $G$ be a group and $A$ a set. A {\em partial action} of $G$ on $A$ \cite{Exel98, KL04} is a partial map $\cdot\colon A\times G\to A$, $(a,g)\mapsto a\cdot g$, which satisfies conditions (PA), (Um) and also the condition
\begin{enumerate}
    \item[(I)] If $a\cdot g$ is defined then $(a\cdot g)\cdot g^{-1}$ is defined and $(a\cdot g)\cdot g^{-1} = a$.
\end{enumerate}

It has been observed in \cite{MS04} that $\cdot$  is a partial action of $G$ on $A$ in the sense of \cite{Exel98, KL04} (that is, it satisfies (PA), (Um) and (I)) if and only if it satisfies (PA), (Um) and (S). The following result is a direct consequence of \cite[Proposition 3.3]{KL04} but we provide it with a proof, for completeness.

\begin{proposition}\label{prop:group} 
Let $S$ be a group and let $\cdot$ be a partial action of $S$ on a set $A$ in the sense of \cite{Exel98, KL04}. Let $*$ be a global action of $S$ on a set $B$ which is a globalization of $\cdot$. Then the map $\varphi\colon A\otimes S \to B$ from the proof of Theorem \ref{th:tensor}(2) is injective.
\end{proposition}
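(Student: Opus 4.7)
The plan is to take the hypothesis $\varphi(a\otimes s) = \varphi(b\otimes t)$, i.e., $\iota(a)*s = \iota(b)*t$ in $B$, and build a single one-step instance of the generating relation $\to$ connecting $(b,t)$ to $(a,s)$. The strategy leverages the one feature distinguishing the group setting from the monoid case, namely invertibility: since $*$ is a global action of the group $S$, right-multiplication by $s^{-1}$ is a bijection of $B$, and applying it to both sides of the hypothesis immediately yields $\iota(a) = \iota(b)*(ts^{-1})$.

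With this equation in hand, I would set $u = ts^{-1}$ and invoke the defining conditions of a globalization. Since $\iota(b)*u = \iota(a) \in \iota(A)$, condition (G1) of Definition \ref{def:glob} forces $b\cdot u$ to be defined; then (G2) gives $\iota(b\cdot u) = \iota(b)*u = \iota(a)$, and the injectivity of $\iota$ yields $b\cdot u = a$. Because $t = us$ and $b\cdot u = a$, the generating relation of the tensor product provides the step $(b,t) = (b,us) \to (b\cdot u, s) = (a,s)$, and therefore $b\otimes t = a\otimes s$ in $A\otimes S$, which is what we wanted.

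No serious obstacle arises: the argument is a one-step reduction and closes itself the moment the group inverse is used to convert $\iota(a)*s = \iota(b)*t$ into a relation of the form $\iota(a) = \iota(b)*u$. It is precisely this maneuver that has no analogue in the monoid case, and so the proof makes clear why, as advertised in the introduction, the universal globalizations $A\otimes S$ and $A^S$ collapse to the same object in the group setting but can diverge drastically beyond it.
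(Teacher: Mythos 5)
Your argument is correct and is essentially identical to the paper's proof: both cancel $s$ on the right using the group inverse to get $\iota(a)=\iota(b)*ts^{-1}$, then apply (G1), (G2) and the injectivity of $\iota$ to conclude $b\cdot ts^{-1}=a$, and finally use the defining relation of the tensor product to get $b\otimes t = b\otimes ts^{-1}s = b\cdot ts^{-1}\otimes s = a\otimes s$. No changes are needed.
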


\begin{proof}
Let $\iota\colon A\to B$ be the injective map such that $(B,*)$ is a globalization of $(A,\cdot)$ via $\iota$. Recall that the map $\varphi\colon A\otimes S\to B$ from the proof of Theorem \ref{th:tensor}(2) is given by $a\otimes s\mapsto \iota(a)*s$. Assume that $\iota(a)*s=\iota(b)*t$. Acting on both sides of this equality by $s^{-1}$ from the right, we obtain $\iota(a) = \iota(b) * ts^{-1}$. Then $\iota(b)*ts^{-1} \in \iota(A)$, and (G1) implies that $b\cdot ts^{-1}$ is defined. In view of (G2), we have that $\iota(b\cdot ts^{-1})= \iota(b)*ts^{-1} = \iota(a)$. Injectivity of $\iota$ yields that $b\cdot ts^{-1} = a$. Therefore, $b\otimes t = b\otimes ts^{-1}s = b\cdot ts^{-1}\otimes s = a\otimes s$. This proves that $\varphi$ is injective.  
\end{proof}

\begin{corollary} \label{cor:groups} 
Let $S$ be a group and let $\cdot$ be a partial action of $S$ on a set $A$ in the sense of \cite{Exel98, KL04}. Let $(B, *)$ and $(C,\circ)$ be $A$-generated globalizations of  $(A,\cdot)$ via the injective maps $\iota$ and $i$, respectively. Then the map $B\to C$, $\iota(a)*s \mapsto i(a)\circ s$, is an isomorphism of 
global $S$-acts. In particular, $A\otimes S$ and $A^S$ are isomorphic via the map $A\otimes S\to A^S$ given by $a\otimes s\mapsto f_{a,s}$.
\end{corollary}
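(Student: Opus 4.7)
\smallskip

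\noindent\textbf{Proof proposal.} The plan is to use the tensor product globalization $A\otimes S$ as a bridge. By Corollary \ref{cor:initial} (noting that, in the monoid case, Lemma \ref{lem:firm} makes $(A,\cdot)$ automatically firm, and the group axioms of Exel/\cite{KL04} imply condition (S)), the triple $(A\otimes S,\delta,*)$ is an initial object in $\mathcal{G}(A,S,\cdot)$. In particular, the universal property of Theorem \ref{th:tensor}(2) yields unique $S$-act morphisms
\[
\varphi_B\colon A\otimes S\to B,\quad a\otimes s\mapsto \iota(a)*s,\qquad \varphi_C\colon A\otimes S\to C,\quad a\otimes s\mapsto i(a)\circ s.
\]

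The first key step is to show that both $\varphi_B$ and $\varphi_C$ are isomorphisms of global $S$-acts. Injectivity of each is exactly the content of Proposition \ref{prop:group}, applied to the groupoid case. Surjectivity follows from the assumption that $B$ and $C$ are $A$-generated: every $b\in B$ has the form $\iota(a)*s=\varphi_B(a\otimes s)$, and symmetrically for $C$ (alternatively, one can appeal to Proposition \ref{prop:unique}\eqref{i:u3}, since $A\otimes S$ is itself $A$-generated by Theorem \ref{th:tensor}(1)). Hence $\varphi_B$ and $\varphi_C$ are bijective $S$-act morphisms, so their inverses are also $S$-act morphisms.

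The second step is simply to define the desired map as the composition $\psi:=\varphi_C\circ\varphi_B^{-1}\colon B\to C$. Then $\psi$ is an isomorphism of global $S$-acts, and for every $a\in A$ and $s\in S$ one has
\[
\psi(\iota(a)*s)=\varphi_C(\varphi_B^{-1}(\varphi_B(a\otimes s)))=\varphi_C(a\otimes s)=i(a)\circ s,
\]
which is the formula in the statement. The main (small) subtlety here is precisely that the formula $\iota(a)*s\mapsto i(a)\circ s$ is a priori not obviously well-defined on $B$ — different pairs $(a,s)$ may give the same element of $B$ — and the factorization through $A\otimes S$ is exactly what makes it well-defined.

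For the ``in particular'' clause, apply the first part with $(C,\circ)=(A^S,*)$ and $i=\lambda$. By Lemma \ref{lem:closed} (since $S$ is a monoid), $(A,\cdot)$ is nonsingular, and (Um) gives unitarity; together with strongness this places $(A^S,*)$ in $\mathcal{G}_A(A,S,\cdot)$ via Theorem \ref{th:hom}(1). Taking $(B,*)=(A\otimes S,*)$ with $\iota=\delta$, the induced isomorphism $\psi\colon A\otimes S\to A^S$ sends $a\otimes s=\delta(a)*s$ to $\lambda(a)*s=f_{a,1}*s=f_{a,s}$ by \eqref{eq:act3}, which is the claimed map. I do not foresee a genuine obstacle beyond assembling these pieces; the only point requiring care is verifying that the two universal maps out of $A\otimes S$ land injectively in the two $A$-generated globalizations, which is exactly where Proposition \ref{prop:group} (and hence the group hypothesis) is used.
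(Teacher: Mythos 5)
Your proposal is correct and follows essentially the same route as the paper: the paper also obtains the isomorphism by noting that the canonical map $A\otimes S\to B$ is injective by Proposition \ref{prop:group} and surjective by Proposition \ref{prop:unique}\eqref{i:u3} since $B$ is $A$-generated, and then (implicitly) composes the resulting isomorphisms for $B$ and $C$. Your explicit remark that factoring through $A\otimes S$ is what makes the assignment $\iota(a)*s\mapsto i(a)\circ s$ well defined is a worthwhile clarification that the paper leaves tacit.
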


\begin{proof} The statement follows from Proposition \ref{prop:group} because,  since the globalization $B$ therein is $A$-generated, the map $\varphi$ is 
surjective (see Proposition \ref{prop:unique}\eqref{i:u3}). Hence it is a bijection, and  an isomorphism of 
global $S$-acts. 
\end{proof}

The following example shows that a partial action of a monoid can have infinitely many pairwise non-isomorphic globalizations and that $A\otimes S$ and $A^S$ can be far different. 

\begin{example}\label{ex:example}
{\em Let ${\mathbb N}$ be the set of positive integers and ${\mathbb N}^{0}=({\mathbb N}\cup\{0\},+)$ be the additive monoid of non-negative integers. 
Let the partial map ${\mathbb N}\times {\mathbb N}^{0}\to {\mathbb N}$, $(a,n)\mapsto a\cdot n$, be given by letting $a\cdot n$ be defined if and only if $a-n>0$ in which case $a\cdot n=a-n$. It is easy to see that $\cdot$ is a partially defined action of ${\mathbb N}^{0}$ on ${\mathbb N}$.

Let $B_{\mathcal{Z}}={\mathbb{Z}}$ be the set of integers and $\iota\colon {\mathbb N}\to B_{\mathcal{Z}}$ be the map which acts identically on ${\mathbb N}$.
For each $b\in B_{\mathcal{Z}}$ and $n\in {\mathbb N}^{0}$ put
$b*n = b-n$. Then $*$ is a global action of ${\mathbb N}^{0}$ on $B_{\mathcal{Z}}$ which is a  globalization of $\cdot$ via the map $\iota$. Since every $x\in B_{\mathcal{Z}}$ can be written as $x=a*n$ where $a\in {\mathbb N}$ and $n\in {\mathbb N}^{0}$, $B_{\mathcal{Z}}$ is ${\mathbb N}$-generated. In addition, for any $a,b\in {\mathbb N}$ and $n,m\in \mathbb{N}^0$ we have $a*n=b*m$ if and only if $a-n=b-m$. From this it easily follows that $B_{\mathcal{Z}}$ is isomorphic to ${\mathbb N}\otimes {\mathbb N}^{0}$ via the map $B_{\mathcal{Z}}\to {\mathbb N}\otimes {\mathbb N}^{0}$ given by $a*n \mapsto a\otimes n$.

For an integer $a\leq 0$ put $B_{a} = \{z\in {\mathbb Z}\colon z\geq a\}$. For each $b\in B_{a}$ and $n\in {\mathbb N}^{0}$ put
$$
b*_an = \left\lbrace\begin{array}{ll} b-n, & \text{if } b-n >a,\\
a, & \text{if } b-n\leq a.\end{array}\right.
$$
Then $*_a$ is an action of ${\mathbb N}^{0}$ on $B_a$ which globalizes $\cdot$ via the map $\iota$. In addition, $B_a$ is ${\mathbb N}$-generated.

If $a<b$ where $a,b\leq 0$ are integers, then the map $\varphi\colon B_a\to B_b$ given by  
$$
\varphi(x) = \left\lbrace\begin{array}{ll} x, & \text{if } x\geq b,\\
b, & \text{if } a\leq x \leq b-1,\end{array}\right.
$$
is a morphism of global ${\mathbb N}^{0}$-acts from $B_a$ to $B_b$. It can be checked that the map $\varphi$ can be alternatively given by $\varphi(c*_a n) = c*_b n$ where $c\in {\mathbb N}$ and $n\in \mathbb{N}^0$. 

Similarly, for an integer $b\leq 0$ the map $\psi\colon B_{\mathcal{Z}} \to B_b$ given by  
$$
\psi(x) = \left\lbrace\begin{array}{ll} x, & \text{if } x\geq b,\\
b, & \text{if } x \leq b-1,\end{array}\right.
$$
is a morphism of ${\mathbb N}^{0}$-acts from $B_{\mathcal{Z}}$ to $B_b$. It can be checked that the map $\psi$ can be alternatively given by $\psi(c* n) = c*_b n$ where $c\in {\mathbb N}$ and $n\in {\mathbb N}^{0}$. Proposition \ref{prop:isom1} implies that $B_0$ is isomorphic to ${\mathbb N}^{{\mathbb N}^{0}}$. If $a\neq b$ then $B_a$ is not isomorphic to $B_b$, since there is no bijection from $B_a$ to $B_b$ which would be identical on ${\mathbb N}$. Similarly, $B_{\mathcal{Z}}$ is not isomorphic to any $B_a$ where $a\leq 0$ is an integer.}
\end{example}

\section*{Acknowlegements} We are grateful to the referee for useful comments.


\begin{thebibliography}{99}
\bibitem{A03} F. Abadie, Enveloping actions and Takai duality for partial actions, {\em J. Funct. Anal.} {\bf 197}  (2003) (1), 14--67.

\bibitem{ABV19} M. M. S. Alves, E. Batista, J. Vercruysse, 
Dilations of partial representations of Hopf algebras, 
{\em J. Lond. Math. Soc.} {\bf 100} (2019) (1), 273--300. 

\bibitem{BEGRR22} A. Baraviera, R. Exel, D. Gon\c{c}alves, F. Rodrigues, D. Royer,  Entropy for partial actions of ${\mathbb{Z}}$, {\em Proc. Amer. Math. Soc.} {\bf 150} (2022) (3), 1089--1103. 

\bibitem{CS} Y.~Q. Chen, K.~P. Shum, 
Morita equivalence for factorisable semigroups,  
{\em Acta Math. Sin. (Engl. Ser.)} {\bf 17} (2001), 437--454.

\bibitem{CG12} C. Cornock, V. Gould, Proper two-sided restriction semigroups and partial actions, {\em J. Pure Appl. Algebra} {\bf 216} (2012), 935--949.

\bibitem{D19} M. Dokuchaev, Recent developments around partial actions, {\em S\~{a}o Paulo J. Math. Sci.} {\bf 13} (2019) (1), 195--247.

\bibitem{DKhK21} M. Dokuchaev, M. Khrypchenko, G. Kudryavtseva, Partial actions and proper extensions of two-sided restriction semigroups,  {\em J. Pure Appl. Algebra} {\bf 225} (2021) (9), paper no. 106649, 30 pp.

\bibitem{DKhS21} M. Dokuchaev, M. Khrypchenko, J.~J. Sim\'on,  Globalization of partial cohomology of groups, {\em Trans. Amer. Math. Soc.} {\bf 374} (2021) (3), 1863--1898.

\bibitem{Exel98} R. Exel, Partial actions of groups and actions of inverse semigroups, {\em Proc. Amer. Math. Soc.} {\bf 126} (1998), 3481--3494.

\bibitem{FMS21} G. Fonseca, G. Martini, L. Silva, Partial (co)actions of Taft and Nichols Hopf algebras on their base fields, {\em Internat. J. Algebra Comput.} {\bf 31} (2021) (7), 1471--1496.

\bibitem{GM2000} J.~L. Garc\'ia, L. Mar\'in, 
Some properties of tensor-idempotent rings, 
{\em Contemp. Math.} {\bf 259} (2000), 223--235.

\bibitem{GM2001} J.~L. García, L. Mar\'in, 
Morita theory for associative rings, 
{\em Comm. Algebra} {\bf 29} (2001), 5835--5856.

\bibitem{GH09} V. Gould, C. Hollings, Partial actions of inverse and weakly left ample semigroups, {\em J. Aust. Math. Soc.} {\bf 86} (2009), 355--377.

\bibitem{H07}  C. Hollings, 
Partial actions of monoids, {\em Semigroup Forum} {\bf 75} (2007), 293--316.

\bibitem{HV20} J. Hu, J. Vercruysse, 
Geometrically partial actions, {\em
Trans. Amer. Math. Soc.} {\bf 373} (2020) (6), 4085--4143.

\bibitem{KL04} J. Kellendonk, M.~V.~Lawson, Partial actions of groups,
{\it Internat. J. Algebra Comput.} {\bf
14} (2004) (1), 87--114.

\bibitem{Kh19} M. Khrypchenko, Partial actions and an embedding theorem for inverse semigroups, {\em Period. Math. Hungar.} {\bf 78} (2019) (1), 47--57.

\bibitem{KhN18} M. Khrypchenko, B. Novikov, Reflectors and globalizations of partial actions of groups, {\em J. Aust. Math. Soc.} {\bf 104} (2018) (3), 358--379. 


\bibitem{KKM} M. Kilp, U. Knauer, A.V. Mikhalev, 
{\em Monoids, acts and categories,} 
De Gruyter Expositions in Mathematics, 29. Walter de Gruyter \& Co., Berlin, 2000. xviii+529 pp.


\bibitem{K15} G. Kudryavtseva, 
Partial monoid actions and a class of restriction semigroups, {\em J. Algebra} {\bf 429} (2015), 342--370.

\bibitem{K19} G. Kudryavtseva, Two-sided expansions of monoids, {\em Internat. J. Algebra Comput.} {\bf 29} (2019) (8), 1467--1498.

\bibitem{LM16} V. Laan, L. M\'arki, Fair semigroups and Morita equivalence, {\em Semigroup Forum}, {\bf 92} (2016), 633--644.

\bibitem{LMR18} V. Laan, L. M\'arki, \"U. Reimaa,
Morita equivalence of semigroups revisited: firm semigroups, 
{\em J. Algebra} {\bf 505} (2018), 247--270. 

\bibitem{Lawson2011} M. V. Lawson, Morita equivalence of semigroups 
with local units, \emph{J. Pure Appl. Algebra} {\bf 215} (2011), 455--470. 

\bibitem{MS04} M. Megrelishvili,  L. Schr\"oder, Globalisation of confluent partial actions on topological and metric spaces, {\em Topology Appl.} {\bf 145} (2004), 119--145.

\bibitem{Quillen} D. G. Quillen, Module theory over nonunital rings, (1996), unpublished notes.

\bibitem{R22} J.~L.~V.~Rodr\'{i}guez, Partial actions on reductive Lie algebras, {\em Comm. Algebra} {\bf 50} (2022) (4), 1750--1767.

\bibitem{SV22} P. Saracco, J. Vercruysse,
Globalization for geometric partial comodules, {\em
J. Algebra} {\bf 602} (2022), 37--59.

\bibitem{Talwar96} S. Talwar, Strong Morita equivalence and a generalisation of the Rees theorem, {\em J. Algebra} {\bf 181} (1996) (2), 371--394

\bibitem{Taylor} J. L. Taylor, 
A bigger Brauer group, {\em Pacific J. Math.} {\bf 103} (1982), 163--203.

\end{thebibliography}
\end{document}